\long\def\symbolfootnote[#1]#2{\begingroup%
\def\thefootnote{\fnsymbol{footnote}}\footnote[#1]{#2}\endgroup}
\newcommand{\tr}{\ensuremath{{}^t\!}}
\newcommand{\tra}{\ensuremath{{}^t}}
\def\imod#1{\allowbreak\mkern10mu({\operator@font mod}\,\,#1)}
\newtheorem{theorem}{Theorem}[section]
\newtheorem{lemma}[theorem]{Lemma}
\newtheorem{corollary}[theorem]{Corollary}
\newtheorem{proposition}[theorem]{Proposition}
\newtheorem{definition}[theorem]{Definition}
\newtheorem*{theorem*}{Theorem}
\theoremstyle{definition}
\newtheorem{example}[theorem]{Example}
\numberwithin{equation}{section}
\newcommand{\ignore}[1]{}
\newcommand{\mynote}[1]{}
\begin{document}
\setcounter{section}{0}
\title{Conjugacy classes of centralizers in unitary groups}
\author{Sushil Bhunia}
\author{Anupam Singh}
\address{IISER Pune, Dr. Homi Bhabha Road, Pashan, Pune 411008 INDIA}
\email{sushilbhunia@gmail.com}
\email{anupamk18@gmail.com}
\date{}
\thanks{The first named author gratefully acknowledges the PhD fellowship provided by CSIR during this work. The second named author acknowledges the support of NBHM grant during this work.}
\subjclass[2010]{20E45, 20G15}
\keywords{Unitary groups, centralizers, $z$-classes}

\begin{abstract}
Let $G$ be a group. Two elements $x,y \in G$ are said to be in the same $z$-class if their centralizers in $G$ are conjugate within $G$. Consider $\mathbb F$ a perfect field of characteristic $\neq 2$, which has a non-trivial Galois automorphism of order $2$. Further, suppose that the fixed field $\mathbb F_0$ has the property that it has only finitely many field extensions of any finite degree. In this paper, we prove that the number of $z$-classes in the unitary group over such fields is finite. Further, we count the number of $z$-classes in the finite unitary group $U_n(q)$, and prove that this number is same as that of $GL_n(q)$ when $q>n$.
\end{abstract}
\maketitle
\section{Introduction}
Let $G$ be a group. Two elements $x$ and $y \in G$ are said to be $z$-equivalent, denoted as $x\sim_{z} y$, if their centralizers in $G$ are conjugate, i.e., $\mathcal Z_G(y)=g\mathcal Z_G(x)g^{-1}$ for some $g\in G$, where $\mathcal Z_G(x)$ denotes centralizer of $x$ in $G$. Clearly, $\sim_{z}$ is an equivalence relation on $G$. The equivalence classes with respect to this relation are called $z$-classes. It is easy to see that if two elements of a group $G$ are conjugate then their centralizers are conjugate thus they are also $z$-equivalent. However, in general, the converse is not true. We are interested in reductive linear algebraic groups, where a group may have infinitely many conjugacy classes but finitely many $z$-classes. In geometry, $z$-classes describe the behaviour of dynamical types. That is, if a group $G$ is acting on a manifold $M$ then understanding (dynamical types of) orbits is related to understanding (conjugacy classes of) centralizers. In this paper, we explore this topic for certain classical groups. Steinberg (see~\cite{St} Section 3.6 Corollary 1 to Theorem 2) proved that for a reductive algebraic group $G$ defined over an algebraically closed field, of good characteristic, the number of $z$-classes is finite (even though there could be infinitely many conjugacy classes). Thus, it is natural to ask how far ``finiteness of $z$-classes'' holds true for algebraic groups defined over a base field. This is certainly not true even for $GL_2$ over the field $\mathbb Q$ (see Section~\ref{example}). Thus we need to restrict to certain kind of fields.  
\begin{definition}[Property FE]\label{FE}
A perfect field $\mathbb{F}$ of characteristic $\neq 2$  has the property {\rm FE} if $\mathbb{F}$ has only finitely many field extensions of any finite degree. 
\end{definition}
\noindent Examples of such fields are an algebraically closed field (for example, $\mathbb C$), real numbers $\mathbb R$, local fields (e.g., $p$-adics $\mathbb Q_p$) and finite fields $\mathbb F_q$. 
In~\cite{Ku} for $GL_n$ and in~\cite{GK1} for orthogonal groups $O(V,B)$ and symplectic groups $Sp(V,B)$, it is proved that over a field with the property FE these groups have only finitely many $z$-classes. In this paper, we extend this result to the unitary groups. We prove the following result,    
\begin{theorem}\label{maintheorem}
Let $\mathbb F$ be a field with a non-trivial Galois automorphism of order $2$ and fixed field $\mathbb F_0$. Let $V$ be a finite dimensional vector space over $\mathbb F$ with a non-degenerate hermitian form $B$. Suppose that the fixed field $\mathbb{F}_0$ has the property {\rm FE}. Then, the number of $z$-classes in the unitary group $U(V,B)$ is finite.
\end{theorem}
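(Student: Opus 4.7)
I would follow the strategy used in \cite{Ku} for $GL_n$ and in \cite{GK1} for the orthogonal and symplectic groups: Jordan decomposition followed by a detailed analysis of semisimple centralizers via polynomial data. Any $g \in U(V,B)$ admits a unique decomposition $g = su$ with $s$ semisimple, $u$ unipotent, and $su = us$; then $\Z_{U(V,B)}(g) = \Z_{\Z_{U(V,B)}(s)}(u)$. Counting conjugacy classes of centralizers in $U(V,B)$ thus reduces to (i) classifying, up to $U(V,B)$-conjugacy, the centralizers $C_s := \Z_{U(V,B)}(s)$ as $s$ ranges over semisimple elements, and (ii) inside each such $C_s$, counting conjugacy classes of centralizers of unipotent elements.

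For (i), I would analyze $s$ via the factorization of its characteristic polynomial over $\mathbb F$. The condition $s^* = s^{-1}$ (where $*$ is the $B$-adjoint) induces a natural involution $p(t) \mapsto p^*(t) := t^{\deg p}\sigma(p)(t^{-1})/\sigma(p)(0)$ on monic polynomials with nonzero constant term, where $\sigma$ is the Galois involution of $\mathbb F/\mathbb F_0$ applied coefficientwise. Irreducible factors split into \emph{self-dual} ones ($p = p^*$) and \emph{dual pairs} $\{p, p^*\}$ with $p \neq p^*$. The associated primary decomposition $V = \bigoplus_i V_i$ is $B$-orthogonal: on each self-dual block the restriction of $B$ is a non-degenerate hermitian form over an appropriate extension field carrying an induced involution, and $\Z(s|_{V_i})$ is a unitary group over that extension; on each dual-pair block the two kernels are totally isotropic and paired non-degenerately by $B$, yielding a centralizer isomorphic to a general linear group over an extension of $\mathbb F$. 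Hence the conjugacy class of $C_s$ is encoded by a finite list of (extension-with-involution, dimension) pairs constrained by $\sum_i \dim V_i = \dim V$. Property FE is inherited by finite extensions of $\mathbb F_0$ and bounds the number of extensions of any fixed degree, so only finitely many such lists occur.

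For (ii), each $C_s$ is a direct product of groups $GL_{m_i}(K_i)$ and unitary groups $U_{n_j}(K_j', B_j')$ over finite extensions whose fixed subfields still satisfy FE. Unipotent elements and their centralizers in such a product decompose componentwise. The $GL$-factors are handled by \cite{Ku}. For the unitary factors: if the factor has dimension strictly less than $\dim V$, I would apply strong induction on $\dim V$; the remaining case is when $s$ is a norm-one scalar (so $C_s = U(V,B)$ itself), where I count $z$-classes of unipotents in $U(V,B)$ directly using the classification of unipotent conjugacy classes by partitions of $\dim V$, which are finite in number and each yields a single $z$-class.

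The main obstacle is step (i): verifying that the polynomial involution $p \mapsto p^*$ interacts correctly with $B$, that the primary decomposition is $B$-orthogonal, and that on each block the centralizer has the claimed structure. This requires careful bookkeeping with the $\sigma$-twisted arithmetic of $\mathbb F[t]$ and with how hermitian forms over $\mathbb F$ descend to forms over extension fields carrying induced involutions. A secondary technical point is checking that, given FE for $\mathbb F_0$, only finitely many isomorphism types of ``extension-with-involution of bounded degree'' can appear, so that the combinatorial data parametrizing $C_s$ really comes from a finite set.
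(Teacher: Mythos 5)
Your plan is essentially the paper's own proof: Jordan decomposition, classification of semisimple centralizers through the factorization of the minimal/characteristic polynomial into self-U-reciprocal factors and dual pairs (giving an orthogonal primary decomposition, unitary factors over extensions with involution, and $GL$-factors over extensions for the dual pairs), finiteness of the resulting data from property FE, and then counting unipotent $z$-classes inside each semisimple centralizer. The only substantive slip is in your step (ii) for the case $C_s = U(V,B)$: over a general field satisfying FE (e.g.\ $\mathbb R$ or $\mathbb Q_p$) unipotent conjugacy classes in $U(V,B)$ are \emph{not} parametrized by partitions alone, and a partition need not give a single $z$-class; each Jordan block size carries an auxiliary hermitian form (valued in $\mathbb F[x]/\langle (x-1)^{d_i}\rangle$), and distinct forms can yield non-conjugate centralizers. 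The paper handles this via Wall's approximation theorem, which reduces these forms to hermitian forms over $\mathbb F$, and then FE (through Jacobson's theorem relating hermitian forms over $\mathbb F$ to quadratic forms over $\mathbb F_0$, plus finiteness of $\mathbb F_0^*/{\mathbb F_0^*}^2$) gives finitely many choices per block, hence finitely many unipotent classes and $z$-classes. Your finiteness conclusion survives once you make this correction, since the same FE argument you invoke for the semisimple part supplies the needed bound on the form data; but as stated, "each partition yields a single $z$-class" is false outside the algebraically closed and finite-field settings.
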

\noindent  This theorem is proved in Section~\ref{prooftheorem1}. 

If we look at the character table of $SL_2(q)$ (for example see~\cite{B} or ~\cite{Pr}) we notice that the conjugacy classes and irreducible characters are grouped together. One observes a similar pattern in the work of Srinivasan~\cite{Sr} for $Sp_4(q)$. In~\cite{Gr}, Green studied the complex representations of $GL_n(q)$, where he introduced the function $t(n)$ for the `types of characters/classes' (towards the end of Section 1 on page 407-408) which is the number of $z$-classes in $GL_n(q)$. He observed we quote from Green (page 408 in~\cite{Gr}), the following:
\begin{quote}
The number $t(n)$ appears as the number of rows or columns of a character table of $GL_n(q)$. This is because the irreducible characters, which by a well-known theorem of representation theory are the same in number as the conjugacy classes, themselves collect into types in a corresponding way, and the values of all the characters of a given type at all the classes of a given type can be included in a single functional expression.
\end{quote}
\noindent In Deligne-Lusztig theory, one studies the representation theory of finite groups of Lie type and the $z$-classes of semisimple elements in the dual group play an important role. In~\cite{Hu}, Humphreys (Section 8.11) defined genus of an element in algebraic group $G$ over $\mathbb F$. Two elements have the same genus if they are $z$-equivalent in $G(\mathbb F)$ and the genus number (respectively semisimple genus number) is the number of $z$-classes (respectively the number of $z$-classes of semisimple elements). Thus understanding $z$-classes for finite groups of Lie type, especially semisimple genus, and their counting is of importance in representation theory (see~\cite{Ca, DM, Fl, FG}). Bose, in~\cite{Bo}, calculated genus number for simply connected simple algebraic groups over an algebraically closed field and compact simple Lie groups. Classification of $z$-classes in $U(n,1)$, the isometry group of the $n$-dimensional complex hyperbolic space is done in~\cite{CG}. The second named author studied $z$-classes for the compact group of type $G_2$ in \cite{Si}. In this paper, we count the $z$-classes in complex hyperbolic groups $U(n,1)$ and in finite unitary groups. The main theorem is as follows (for proof see Section~\ref{prooftheorem2}):
\begin{theorem}\label{maintheorem2}
The number of $z$-classes in $U_n(q)$ is same as the number of $z$-classes in $GL_n(q)$, when $q>n$. 
Thus, the number of $z$-classes for either group can be read off 
by looking at the coefficients of the function 
$\displaystyle\prod_{i=1}^{\infty} z(x^i)$, where $z(x)=\displaystyle\prod_{j=1}^{\infty}\frac{1}{(1-x^j)^{p(j)}}$.
\end{theorem}  
\noindent However, the above need not be true when $q\leq n$. We mention this in Example~\ref{example3}.

{\bf Acknowledgement: } The authors would like to thank Ben Martin, University of Aberdeen for his comments on the paper. The authors thank  referees for the wonderful comments which improved the readability of this paper.
 
\section{Conjugacy classes and centralizers in unitary groups}
In this section, we introduce the unitary groups. We begin with more general definition of hermitian forms, following~\cite{sc} Chapter 7, than what is available in~\cite{Gv} or in other textbooks on the subject of classical groups over a field. This is required for the description of conjugacy classes. Let $R$ be a commutative ring with a non-trivial involution $\sigma$ which, to simplify notation, we denote by $\bar a=\sigma(a)$. The subring of $R$ fixed by $\sigma$ is denoted by $R_0$. Let $V$ be a finitely generated projective module over $R$. A hermitian form on $V$ is a sesquilinear map $B\colon V\times V\rightarrow R$, satisfying $B(u,v)=\overline{ B(v,u)}$ for all $u,v \in V $. The pair $(V,B)$ is called a hermitian space. One can define when two hermitian spaces are equivalent (which is given in terms of an isometry defined in~\cite{sc} Chapter 7, Definition 2.1 (iv)). We list some results on classification of hermitian forms which will be required later in this paper. The following result follows from the main Theorem in~\cite{Ja}.
\begin{proposition}\label{Jacobson}
Let $\mathbb F$ be a field with a non-trivial Galois automorphism of order $2$. Suppose $\mathbb F_0$ has the property {\rm FE} and $V$ is a finite dimensional vector space over $\mathbb F$. Then there are only finitely many non-equivalent hermitian forms on $V$. 
\end{proposition}
\begin{proof}
From Jacobson's theorem (see Theorem~\cite{Ja}), the classification of hermitian forms over $\mathbb F$ is same as the classification of symmetric bilinear forms over $\mathbb F_0$. Since symmetric bilinear forms can be diagonalised where the non-zero entries belong to $\mathbb F_0^{\times}/{\mathbb F_0^{\times}}^2$, which is given to be finite (as $\mathbb F_0^{\times}/{\mathbb F_0^{\times}}^2$ determines the distinct degree $2$ field extensions of $\mathbb F_0$ which is finite as $\mathbb F_0$ has the property FE), this gives the required result.
\end{proof}
We need to understand Hermitian forms on a module $V$ over ring $A=\mathbb F\times \mathbb F$, where $\mathbb F$ is a field. We consider $A$ as an algebra over $\mathbb F$ with diagonal embedding and the ``switch'' involution on $A$ given by $(a,b)\mapsto (b,a)$. Then we have orthogonal idempotents $e_1=(1,0), e_2=(0,1)$ in $A$ satisfying $e_1+e_2=1$ and $e_1e_2=0$. We decompose $V=V_1\oplus V_2$, where $V_1=e_1V$ and $V_2=e_2V$. Clearly, $V_1$ and $V_2$ are vector spaces over $\mathbb F$, say, of dimension $n$ and $m$ respectively. Then, 
\begin{proposition}\label{ffhermitian}
With notation as above, any hermitian form on $A$-module $V$, up to equivalence, is determined by the Smith normal form (equivalently the rank) of an $n\times m$ matrix over $\mathbb F$.
\end{proposition}
\begin{proof}
For convenience, we fix a basis of $V_1$ as $\{\epsilon_1,\ldots,\epsilon_n\}$ and of $V_2$ as $\{\delta_1,\ldots,\delta_m\}$ which together give a generating set for $A$-module $V$. Let $B$ be a sesquilinear form on $V$. Then, using orthogonality of idempotents we get, $B(v_i,v_i)=0$ for $v_i\in V_i$. Thus, $B$ is determined by the values $B(\epsilon_i, \delta_j)$ and $B(\delta_j, \epsilon_i)$ for all $i$ and $j$. Further, if $B$ is a hermitian form then $B$ is determined by the values $B(\epsilon_i, \delta_j)$, a $n\times m$ matrix over $A$. That is, $B$ is determined by its restriction on $V_1\times V_2$, say by a matrix $\beta\in M_{n\times m}(A)$. Hence, the matrix of $B$ would look like $\begin{pmatrix} 0 & \beta \\\tr {\bar \beta} & 0\end{pmatrix}$. We can also check that any isomorphism $P$ of $V$ will be given by $\begin{pmatrix} P_1 &0 \\ 0 & P_2\end{pmatrix}$, where $P_1\in GL(V_1)$ and $P_2\in GL(V_2)$. To get the equivalence of forms we compute $\tr P B\bar P$ and get $\tr P_1\beta \bar P_2$. Thus, $\beta$ is determined by its Smith-normal form. The converse can be easily checked by an explicit construction.
\end{proof}

Usually, to define the unitary group we begin with a field $\mathbb F$ and a non-trivial Galois automorphism $\sigma$ of order $2$ on it. We consider only non-degenerate hermitian forms. An isometry of $(V,B)$ is a linear map $g\in GL(V)$ such that $B(u,v)=B(g(u),g(v))$ for all $u,v\in V$. Such maps are called unitary transformations and the set of all such transformations is called  the {\bf unitary group} $U(V,B)$. There could be more than one hermitian form, up to equivalence, over a given field and similarly more than one non-isomorphic unitary group. For example, when $\mathbb F=\mathbb C$ with conjugation then the hermitian forms are given by signature. However, over a finite field, there is a unique hermitian form (see~\cite{Gv} Corollary 10.4). We discuss some of the cases in Section~\ref{counting}.

To study $z$-classes, it is important to understand the conjugacy classes first. This has been well understood for classical groups through the work of Asai, Ennola, Macdonald, Milnor, Springer-Steinberg, Wall, Williamson~\cite{As,En,Ma,Mi,SS,Wa,Wi} and many others. Since the unitary group is a subgroup of $GL_n(\mathbb F)$ one hopes to exploit the theory of canonical forms to get the conjugacy classes in the unitary group. For our exposition, we follow Springer-Steinberg~\cite{SS}. We begin with recalling notation involved in the description of conjugacy classes.

\subsection{Self-U-reciprocal polynomials}
Let $f(x)=\sum_{i=0}^d a_{i}x^i$ be a polynomial in $\mathbb F[x]$ of degree $d$. We extend the involution on $\mathbb F$ to that of $\mathbb F[x]$ by $\overline{f}(x):=\sum_{i=0}^d \bar{a_{i}} x^i$. Let $f(x)$ be a polynomial with $f(0)\neq 0$. The corresponding U-reciprocal polynomial of $f(x)$ is a degree $d$ polynomial defined by 
$$\tilde{f}(x):=\overline{f}(0)^{-1}\ x^d \  \overline{f}(x^{-1}).$$
A monic polynomial $f(x)$ with a non-zero constant term is said to be \textbf{self-U-reciprocal} if $f(x)=\tilde{f}(x)$. 
In terms of roots, it means that for a self-U-reciprocal polynomial, whenever $\lambda$ is a root,  $\bar{\lambda}^{-1}$ is also a root with the same multiplicity. Note that $f(x)=\tilde{\tilde{f}}(x)$, and if $f(x)=f_{1}(x)f_{2}(x)$ 
then $\tilde{f}(x)=\tilde{f_{1}}(x)\tilde{f_{2}}(x)$. Also, $f(x)$ is irreducible if and only if $\tilde{f}(x)$ is 
irreducible. In the case of $f(x)=(x-\lambda)^n$, the polynomial $f(x)$ is self-U-reciprocal if and only if 
$\lambda \bar{\lambda}=1$. Over a finite field, we have the following result due to Ennola (see~\cite{En} Lemma 2):
\begin{proposition}\label{irr-self-U-rec}
Let $f(x)$ be a monic, irreducible, self-U-reciprocal polynomial over a finite field $\mathbb F_{q^2}$. 
Then the degree of $f(x)$ is odd.    
\end{proposition}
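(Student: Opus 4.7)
The plan is to work with the set $R \subset \overline{\mathbb F_q}$ of roots of $f$ in a fixed algebraic closure. Because $\mathbb F_{q^2}$ is perfect and $f$ is irreducible of degree $d$, $|R| = d$, and the Galois group $\gal(\overline{\mathbb F_q}/\mathbb F_{q^2}) = \langle \phi\rangle$, where $\phi(x) = x^{q^2}$, acts transitively on $R$. Equivalently, $\phi$ acts on $R$ as a single $d$-cycle.

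Next I would translate the self-U-reciprocal hypothesis into a symmetry of $R$. As noted in the paragraph preceding the proposition, $\lambda\in R$ forces $\overline{\lambda}^{-1} = \lambda^{-q} \in R$, so the assignment
\[
\tau\colon R \longrightarrow R, \qquad \tau(\lambda) = \lambda^{-q},
\]
is a well-defined permutation of $R$. The decisive computation is
\[
\tau^{2}(\lambda) = (\lambda^{-q})^{-q} = \lambda^{q^{2}} = \phi(\lambda),
\]
so that $\tau^{2} = \phi$ inside $\mathrm{Sym}(R)$.

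From $\tau^{2} = \phi$ one gets $\tau\phi = \phi\tau$, i.e.\ $\tau$ lies in the centralizer of $\phi$ in $\mathrm{Sym}(R) \cong S_{d}$. Since $\phi$ is a $d$-cycle, its centralizer in $S_{d}$ is the cyclic group $\langle\phi\rangle$ it generates, so $\tau = \phi^{k}$ for some integer $k$. The relation $\phi = \tau^{2} = \phi^{2k}$ then becomes $2k \equiv 1 \pmod{d}$, which has a solution if and only if $\gcd(2,d) = 1$, i.e.\ if and only if $d$ is odd.

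The argument is essentially formal once the bridge $\tau^{2} = \phi$ is spotted, and that identification is the only step that requires any real insight; everything else is bookkeeping. I do not anticipate a serious obstacle: the two ingredients used afterwards, namely that the Frobenius $\phi$ acts on the roots of an irreducible polynomial of degree $d$ over $\mathbb F_{q^2}$ as a single $d$-cycle, and that a $d$-cycle in $S_{d}$ has cyclic centralizer of order $d$, are standard facts.
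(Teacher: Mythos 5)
Your proof is correct. Note that the paper does not prove this proposition at all: it is quoted from Ennola's Lemma~2, so there is no in-text argument to compare against. Your argument is in substance the standard (Ennola-style) one, just packaged group-theoretically: the usual proof picks a root $\lambda$, writes the root set as the Frobenius orbit $\lambda, \lambda^{q^2},\ldots,\lambda^{q^{2(d-1)}}$, uses self-U-reciprocity to write $\lambda^{-q}=\lambda^{q^{2k}}$ for some $k$, and deduces $d\mid 2k-1$, hence $d$ odd; your identity $\tau^2=\phi$ together with ``the centralizer of a $d$-cycle in $S_d$ is $\langle\phi\rangle$'' yields exactly the same congruence $2k\equiv 1\pmod d$. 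The only (harmless) imprecision is the statement $\gal(\overline{\mathbb F_q}/\mathbb F_{q^2})=\langle\phi\rangle$: that Galois group is only topologically generated by $\phi$, but the action on $R$ factors through $\gal(\mathbb F_{q^{2d}}/\mathbb F_{q^2})$, which is genuinely cyclic of order $d$ generated by $\phi$, and that is all you use. All other ingredients (separability, transitivity of Frobenius on the roots, $\tau$ well defined because the roots are nonzero and come in pairs $\lambda,\bar\lambda^{-1}$) are in order.
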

Let $T\in GL(V)$ and $f(x)$ be its minimal polynomial then $\tilde{f}(x)$ is the minimal polynomial of $\bar{T}^{-1}$. If $T \in U(V,B)$ then its minimal polynomial is monic with a non-zero constant term and is self-U-reciprocal. Let $f(x)$ be a self-U-reciprocal polynomial. We can write it as follows,
\begin{equation}\label{u-rec} 
f(x)=\prod_{i=1}^{k_1} p_{i}(x)^{m_{i}}\prod_{j=1}^{k_{2}}(q_{j}(x)\tilde{q_{j}}(x))^{n_{j}},
\end{equation}
where $p_{i}(x)$ is irreducible, self-U-reciprocal, and $q_{j}(x)$ is irreducible, not self-U-reciprocal for all $i$ and $j$.  
 
\subsection{Space decomposition with respect to a unitary transformation}
Let $T \in U(V, B)$ and $f(x)$ be its minimal polynomial. Write $f(x)$ as in the Equation~\ref{u-rec}. This gives primary decomposition of $V$. Further we have the following,
\begin{proposition}\label{primary-decom}
The direct sum decomposition $V=\bigoplus_{i}\mathrm{ker}(f_{i}(T)^{s_{i}})$, where either $f_i(x) = p_i(x)$ and $s_i=m_i$ or $f_i(x)=q_i(x)\tilde q_i(x)$ and $s_i=n_i$, is a decomposition into non-degenerate mutually orthogonal $T$-invariant subspaces.
\end{proposition}
\noindent The proof of this is similar to the orthogonal case as in the Section 3 of~\cite{GK1} and hence we skip the details. This decomposition helps us reduce the questions about conjugacy classes and $z$-classes of a unitary transformation to the unitary transformations with the minimal polynomial of one of the following two kinds:
\begin{itemize}\label{poly-type}
\item[Type 1.] $p(x)^{m}$, where $p(x)$ is monic, irreducible, self-U-reciprocal polynomial with a  non-zero constant term, 
\item[Type 2.] $(q(x)\tilde{q}(x))^{m}$, where $q(x)$ is monic, irreducible, not self-U-reciprocal polynomial with a non-zero constant term.
\end{itemize}
The Proposition~\ref{primary-decom} above gives us a primary decomposition of $V$ into $T$-invariant $B$ non-degenerate subspaces
\begin{equation}\label{primary}
V=\left(\bigoplus_{i=1}^{k_1}V_{i}\right)\bigoplus \left( \bigoplus_{j=1}^{k_2}V_{j} \right),
\end{equation}
where $V_{i}$ corresponds to the polynomials of Type 1 and $V_{j}=V_{q_{j}}+ V_{\tilde{q_{j}}}$ corresponds to the polynomials of Type 2. Denote the restriction of $T$ to each $V_r$ by $T_{r}$. Then the minimal polynomial of $T_{r}$ is one of the two types. It turns out that the centralizer of $T$ in $U(V,B)$ is 
$$\mathcal Z_{U(V,B)}(T)=\prod_{r} \mathcal Z_{U(V_r, B_r)}(T_{r}),$$
where $B_r$ is a hermitian form obtained by restricting $B$ to $V_r$. The direct product here comes from the primary decomposition (see~\cite{SS} Chapter IV, 2.8). Thus the conjugacy class and the $z$-class of $T$ is determined by the restriction of $T$ to each of the primary subspaces. Hence it is enough to determine the conjugacy class and the $z$-class of $T\in U(V,B)$, which has minimal polynomial of one of the types listed above. 
  
\subsection{Conjugacy classes and centralizers}
Now, let us define $E^T=\frac{\mathbb{F}[x]}{<f(x)>}$, an $\mathbb F$ algebra. Then $V$ is an $E^T$-module, where $x$ acts via $T$. To keep track of the action we denote this module by $V^T$ although it's simply $V$ as an $\mathbb F$-vector space. The $E^T$-module structure on $V^T$ determines $GL_n$-conjugacy class of $T$. To determine conjugacy class of $T$ within $U(V,B)$, Springer and Steinberg (see~\cite{SS} Chapter IV, 2.6) defined a non-degenerate hermitian form $H^T$ on $V^T$ induced from $B$ and $T$. We briefly recall this here. Since $f(x)$ is self-U-reciprocal polynomial, there exists a unique involution $\alpha$ on $E^T$ such that $\alpha(x)=x^{-1}$ and $\alpha$ is an extension of $\sigma$ on scalars. Thus, $(E^T, \alpha)$ is an algebra with involution. Further, there exists a $\mathbb{F}$-linear function $l^T\colon E^T \rightarrow \mathbb F$ such that the symmetric bilinear form $\bar{l^T}\colon E^T \times E^T \rightarrow \mathbb F$ given by $\bar{l^T}(a,b)=l^T(ab)$ is non-degenerate with $l^T(\alpha(a))=l^T(a)$ for all $a\in E^T$. The hermitian form $H^T$ on $E^T$-module $V^T$ (with respect to $\alpha$) satisfies $B(eu, v) = l^T(eH^T(u,v))$ for all $e\in E^T$ and $u,v \in V^T$ and is non-degenerate. 
Then we have (see~\cite{SS} Chapter IV, 2.7 and 2.8), 
\begin{proposition}\label{conjugacy}
With notation as above, let $S$ and $T \in U(V,B)$. Then,
\begin{enumerate}
\item the elements $S$ and $T$ are conjugate in $U(V,B)$ if and only if 
\begin{enumerate}
\item there exists an algebra-isomorphism $\psi \colon E^S \rightarrow E^T$ induced by $x$ going to $x$, and
\item an $\mathbb F$-isomorphism $\phi\colon V^S \rightarrow V^T$ satisfying $\phi(a v)=\psi(a)\phi(v)$ and $H^T(\phi(v),\phi(w))=\psi\left(H^S(v,w)\right)$ for all $a \in E^S$ and $v,w\in V^S$.
\end{enumerate}
\item The centralizer $\mathcal Z_{U(V,B)}(T)=U(V^T, H^T)$.
\end{enumerate}
\end{proposition}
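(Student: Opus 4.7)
The plan is to handle the two parts in turn, the central tool being the defining identity $B(eu,v) = l(eH^T(u,v))$ together with non-degeneracy of the bilinear form $\bar l(a,b) = l(ab)$ on $E^T$.

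For part (1), the forward direction is straightforward: if $g \in U(V,B)$ conjugates $S$ to $T$, then $S$ and $T$ share a minimal polynomial $f(x)$, so $E^S = E^T = \mathbb F[x]/\langle f(x)\rangle$ and I take $\psi$ to be the identity. Set $\phi := g$; since $\phi(xu) = g(Su) = Tgu = x\phi(u)$, it is $E^S$-linear. To see that $\phi$ transports $H^S$ to $H^T$, for every $e \in E^T$ I compute
$$l(eH^T(\phi u,\phi v)) = B(e\phi u, \phi v) = B(\phi(eu),\phi v) = B(eu,v) = l(eH^S(u,v)),$$
and non-degeneracy of $\bar l$ forces $H^T(\phi u,\phi v) = H^S(u,v)$. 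Conversely, given $\psi$ and $\phi$ as in (a) and (b), set $g := \phi \in GL(V)$. The $E^S$-linearity condition together with $\psi(x) = x$ gives $gS = Tg$, and the equivalence of hermitian forms, $H^T(\phi u,\phi v) = \psi(H^S(u,v))$, after applying $l$ and using compatibility of $l$ with $\psi$, yields $B(\phi u,\phi v) = l(H^S(u,v)) = B(u,v)$, so $g \in U(V,B)$.

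For part (2), an element $g \in GL(V)$ commutes with $T$ precisely when it is $E^T$-linear on $V^T$, so $\mathcal Z_{GL(V)}(T) = GL_{E^T}(V^T)$. Intersecting with $U(V,B)$ imposes $B(gu,gv) = B(u,v)$. For an $E^T$-linear $g$ and any $e \in E^T$, the defining identity gives
$$l(eH^T(gu,gv)) = B(egu,gv) = B(g(eu),gv), \qquad l(eH^T(u,v)) = B(eu,v).$$
Requiring $B(g(eu),gv) = B(eu,v)$ for all $e$, then invoking non-degeneracy of $\bar l$, gives $H^T(gu,gv) = H^T(u,v)$, i.e., $g \in U(V^T,H^T)$. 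The reverse inclusion follows by taking $e = 1$ in the identity.

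The step I expect to be the main obstacle is the bookkeeping between the three involutions in play: $\sigma$ on $\mathbb F$, its extension to $\mathbb F[x]$, and the involution $\alpha$ on $E^T$. One must check that the isomorphism $\psi$ intertwines the two copies of $\alpha$ and that the trace-like functionals $l^S$ and $l^T$ are compatible under $\psi$, so that the phrase \emph{equivalence of hermitian forms on} $V^S$ \emph{and} $V^T$ is even well-posed. Once this is set up carefully, both parts reduce to the same rigidity principle: non-degeneracy of $\bar l$ upgrades the bilinear identity for $B$ to the pointwise identity for $H^T$.
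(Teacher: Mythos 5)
Your argument is correct, and in fact the paper offers no proof of this proposition at all: it is quoted from Springer--Steinberg (2.7 and 2.8 of Chapter 4 in \cite{SS}), so there is nothing internal to compare against. What you give is the standard transfer-of-structure verification, whose engine is exactly the right one: non-degeneracy of $\bar l$ makes $H^T$ the unique $E^T$-valued form with $B(eu,v)=l(eH^T(u,v))$, so any $B$-isometry intertwining the module structures automatically matches the forms, and conversely. The ``bookkeeping'' worry you flag is harmless here: since $\psi(x)=x$ forces the minimal polynomials of $S$ and $T$ to coincide, one has $E^S=E^T$ with the same (unique) involution $\alpha$, and one may take the same functional $l$ on both sides, so $l\circ\psi=l$ and the equivalence of $H^S$ and $H^T$ is well-posed; the only caveat worth recording is that $H^T$ itself depends on the choice of $l$ (changing $l$ rescales $H^T$ by an $\alpha$-fixed unit), so the statement implicitly fixes one consistent choice, as in \cite{SS}.
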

We can decompose the algebra $E^T$ as a direct sum of subalgebras with respect to $\alpha$ as $E^T=E_{1}\oplus E_{2} \oplus\cdots \oplus E_{r}$, where each $E_{i}$ is $\alpha$-indecomposable (see~\cite{SS} Chapter IV, Section 2.2). That is, $E_i$ can not be further written as a direct sum of $\alpha$-stable subalgebras. Now denote the restriction of $\alpha$ to $E_i$ by $\alpha_{i}$. The $\alpha_i$, thus obtained, is again an involution on $E_i$. Clearly, $E_{i}$'s are of one of the following types (recall the decomposition of $f(x)$ in Equation~\ref{u-rec}, also see~\cite{As} Lemma 2.6). 
\begin{itemize}
\item $\frac{\mathbb{F}[x]}{<p(x)^d>}$, where $p(x)$ is a monic, irreducible, self-U-reciprocal polynomial.
\item $\frac{\mathbb{F}[x]}{<q(x)^d>} \oplus \frac{\mathbb{F}[x]}{<\tilde{q}(x)^d>}$, where $q(x)$ is a monic, irreducible but not self-U-reciprocal.
\end{itemize}
In the second case, the two components $\frac{\mathbb{F}[x]}{<q(x)^d>)}$ and $\frac{\mathbb{F}[x]}{<\tilde{q}(x)^d>}$ are isomorphic local rings (induced by the U-reciprocal polynomial structure) and the restriction of $\alpha$ is given by $\alpha(a,b)=(b,a)$ via the fixed isomorphism. Using Wall's approximation theorem (see Corollary~\ref{Wall's} in the next section) it is easy to see that all non-degenerate hermitian forms over such rings are equivalent. Thus to determine equivalence of $H^T$ we need to look at modules over rings of the first Type above. 

\subsection{Wall's approximation theorem}
We have reduced the conjugacy problem to equivalence of hermitian forms over certain rings. However, it turns out (see the corollary below) that the second case is easy. Let $R$ be a commutative ring, where $2$ is invertible (so that it satisfies the trace condition) and $J$ be its Jacobson radical, and $\alpha$ be an involution on $R$. Let $M$ be a finitely generated module over $R$ and $(M,B)$ be a non-degenerate hermitian space. We define $\underline {M}:= \frac{M}{JM}$ a module over $\underline {R}:=\frac{R}{J}$. 
Now $B$ induces a hermitian form $\underline {B}$ on $\underline{M}$ with respect to 
the involution $\underline{\alpha}$ of $\underline{R}$ induced by $\alpha$. Then it is easy to see that $(\underline{M},\underline{B})$ is non-degenerate. The converse is a theorem of Wall (see~\cite{Wa} Theorem 2.2.1, and also~\cite{As} Proposition 2.5) which would be useful for further analysis. For the general theory of hermitian forms over a ring (which need not be commutative) we refer the reader to~\cite{sc} Chapter 7, Theorem 4.4.
\begin{theorem}[Wall's approximation theorem]
With notation as above,
\begin{enumerate}
\item any non-degenerate hermitian form over $\underline{R}$ is induced by some non-degenerate hermitian form over $R$.
\item Let $(M_{1}, B_{1})$ and $ (M_{2}, B_{2})$ be non-degenerate hermitian spaces over $R$ and correspondingly  $(\underline{M_{1}}, \underline{B_{1}})$ and $(\underline{M_{2}}, \underline{B_{2}})$ be non-degenerate hermitian spaces over $\underline{R}$. Then $(M_{1}, B_{1})$ is equivalent to $(M_{2}, B_{2})$ if and only if $(\underline{M_{1}}, \underline{B_{1}})$ is equivalent to $(\underline{M_{2}}, \underline{B_{2}})$.
\end{enumerate}
\end{theorem}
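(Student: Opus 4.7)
The plan is to prove the two parts via Nakayama-style lifting, with the characteristic $\neq 2$ assumption (ensuring $2\in R^{\times}$) supplying the symmetrization needed on hermitian entries.

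For part (1), I take $M=R^{n}$ free of rank $n$ and lift the Gram matrix of $\underline{B}$ entry by entry. Lift the strictly upper-triangular entries $\underline{b}_{ij}$ ($i<j$) arbitrarily to $b_{ij}\in R$ and set $b_{ji}=\alpha(b_{ij})$; for a diagonal entry, lift $\underline{b}_{ii}$ to any $c\in R$ and replace it by $b_{ii}=(c+\alpha(c))/2$, which is $\alpha$-fixed and still reduces to $\underline{b}_{ii}$ because $\underline{b}_{ii}=\underline{\alpha}(\underline{b}_{ii})$. The resulting $B=(b_{ij})$ is then hermitian. Non-degeneracy is automatic: $\det B$ reduces to the unit $\det\underline{B}$ in $\underline{R}$, and any element of $R$ whose reduction modulo $J$ is a unit is itself a unit.

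For part (2), the forward implication is immediate, since an isometry over $R$ reduces to an isometry over $\underline{R}$. For the converse, lift the given isometry $\underline{\phi}\colon(\underline{M_{1}},\underline{B_{1}})\to(\underline{M_{2}},\underline{B_{2}})$ to an $R$-linear map $\tilde{\phi}\colon M_{1}\to M_{2}$ between the free modules; this $\tilde{\phi}$ is automatically an isomorphism by Nakayama's lemma. Pulling back, the form $B_{2}':=\tilde{\phi}^{*}B_{2}$ agrees with $B_{1}$ modulo $J$, and the problem reduces to producing $\psi\in GL(M)$ with $\psi\equiv I\pmod{J}$ taking $B_{1}$ to $B_{2}'$. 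I would attack this by Newton-style successive approximation: if $B_{1}\equiv B_{2}'\pmod{J^{k}}$, write $\Delta = B_{2}'-B_{1}$ and seek a correction $\psi=I+X$ with $X$ having entries in $J^{k}$; the condition on the transformed Gram matrix linearizes modulo $J^{2k}$ to a hermitian equation in $X$ whose right-hand side is $\Delta$, solvable because $2\in R^{\times}$. Each step improves the congruence from $J^{k}$ to $J^{2k}$, so in any ring where $J^{N}=0$ for some $N$ the iteration terminates with an exact equivalence.

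The main obstacle is verifying that this Newton iteration actually closes to an exact $\psi$. In the full generality of an arbitrary commutative ring with Jacobson radical this would require $J$-adic completeness, but in the applications of interest (Section 2.4) the rings $E_{i}=\mathbb{F}[x]/\langle p(x)^{d}\rangle$ are Artinian local with nilpotent Jacobson radical, so the iteration terminates after $O(\log d)$ steps. The repeated division by $2$, both when lifting diagonal entries in part (1) and when inverting the linearized hermitian equation in part (2), is precisely where the $\operatorname{char}\mathbb{F}\neq 2$ hypothesis is essential.
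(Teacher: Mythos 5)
The paper does not prove this statement at all: it is quoted as known, with a citation to Wall's Theorem~2.2.1 in~\cite{Wa}, and only the subsequent Corollary~\ref{Wall's} is proved. So the only comparison to make is with Wall's original argument, and your sketch is essentially the standard proof. Your Newton step in part (2) does close: with $\Delta=B_2'-B_1$ hermitian and entries in $J^k$, the choice $X=-\tfrac{1}{2}B_1^{-1}\Delta$ gives $B_1X=-\tfrac{1}{2}\Delta$ and $\alpha(X)^{t}B_1=-\tfrac{1}{2}\Delta$ (using that $B_1$ and $\Delta$ are hermitian), so $(I+\alpha(X)^{t})(B_1+\Delta)(I+X)\equiv B_1 \imod{J^{2k}}$, and $I+X$ is invertible since $X$ has entries in $J$; the induction starts at $k=1$ because $\tilde\phi$ reduces to the isometry $\underline\phi$. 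Two caveats. First, as you note, the theorem is simply false for an arbitrary commutative ring with involution: for example $R=\mathbb{Z}_{(5)}$ with trivial involution, where $\langle 1\rangle$ and $\langle 6\rangle$ become equivalent modulo $J=5R$ but are inequivalent over $R$; some hypothesis such as nilpotence of $J$ or $J$-adic completeness is genuinely needed. This matches Wall's actual setting (finite-dimensional algebras over a field, so $J$ nilpotent), and it covers every ring to which the paper applies the theorem, namely $\mathbb{F}[x]/\langle p(x)^d\rangle$ and $\mathbb{F}[x]/\langle q(x)^d\rangle\oplus\mathbb{F}[x]/\langle\tilde q(x)^d\rangle$. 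Second, you assume the underlying modules are free. A non-degenerate hermitian module over a semisimple $\underline R$ with involution need not be free when $\underline R$ has several involution-stable blocks with different multiplicities, so in full generality one should first lift the idempotents decomposing $\underline R$ (possible since $J$ is nilpotent) and argue blockwise, and in part (2) take $M_1,M_2$ finitely generated projective, where the Nakayama argument still works because the kernel of the lifted surjection is a finitely generated direct summand contained in $JM_1$. In the paper's applications ($\underline R$ a field, or $K\oplus K$ with the switch involution, where non-degeneracy forces equal ranks) freeness is automatic, so your proposal, with these routine adjustments, is a correct proof of the statement in the generality in which the paper uses it; the repeated use of $2\in R^{\times}$ is consistent with the standing characteristic $\neq 2$ assumption.
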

\noindent We need the following,
\begin{corollary}\label{Wall's}
Let $\mathbb F$ be a perfect field of characteristic $\neq 2$. Let $V$ be a finitely generated module over $A=\frac{\mathbb{F}[x]}{<q(x)^d>} \oplus \frac{\mathbb{F}[x]}{<\tilde{q}(x)^d>}$, where $q(x)$ is a monic, irreducible polynomial and  $H_{1}$ and $H_{2}$ be two non-degenerate hermitian forms on $V$ with respect to the ``switch'' 
involution on $A$ given by $\overline{(b,a)} = (a,b)$. Then $H_{1}$ and $H_{2}$ are equivalent.
\end{corollary}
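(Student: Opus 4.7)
The plan is to apply Wall's Approximation Theorem to reduce the question to the residue ring $\underline{A} := A/J$, where $J$ is the Jacobson radical of $A$. Since $A$ is the direct sum of two local rings, $J = \bigl(\langle q(x)\rangle/\langle q(x)^d\rangle\bigr) \oplus \bigl(\langle\tilde q(x)\rangle/\langle\tilde q(x)^d\rangle\bigr)$, so $\underline{A} \cong K \oplus K'$ with $K = \mathbb F[x]/\langle q(x)\rangle$ and $K' = \mathbb F[x]/\langle \tilde q(x)\rangle$ finite field extensions of $\mathbb F$, and the switch involution on $A$ descends to the swap involution on $K \oplus K'$. By Wall's theorem it therefore suffices to prove that any two non-degenerate hermitian forms on $\underline V := V/JV$ over $\underline A$ are equivalent.

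For this, I would decompose $\underline V = M_1 \oplus M_2$ via the orthogonal idempotents $e_1 = (1,0)$ and $e_2 = (0,1)$, which satisfy $\overline{e_1} = e_2$; here $M_1 = e_1 \underline V$ is a $K$-vector space and $M_2 = e_2 \underline V$ is a $K'$-vector space. Given any hermitian form $H$ on $\underline V$, the relations $H(e_i u, v) = e_i H(u,v)$ and $H(u, e_i v) = \overline{e_i}\, H(u,v)$, combined with $e_1 e_2 = 0$, force $H$ to vanish identically on $M_1 \times M_1$ and on $M_2 \times M_2$. Hence $H$ is completely encoded by the pairing $H|_{M_1 \times M_2} \colon M_1 \times M_2 \to K$, its companion $H|_{M_2 \times M_1}$ being recovered by applying the involution.

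Finally, non-degeneracy forces $\dim_K M_1 = \dim_{K'} M_2 = n$, and with respect to any choice of bases this pairing is represented by an invertible $n \times n$ matrix over $K$. Since any two matrices in $GL_n(K)$ are related by independent left- and right-multiplication by invertible matrices, and since an independent change of basis on $M_1$ and on $M_2$ assembles automatically into a genuine $\underline A$-module automorphism of $\underline V$, the reductions of $H_1$ and $H_2$ are equivalent over $\underline A$. Applying Wall's theorem in reverse lifts this to the sought equivalence of $H_1$ and $H_2$. The only real subtlety is the bookkeeping of the switch involution: once one has correctly argued that the diagonal blocks of $H$ vanish, the remainder reduces to the essentially trivial observation that any two non-degenerate pairings between free modules of the same rank lie in a single orbit under change of basis.
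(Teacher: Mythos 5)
Your argument is correct, and it shares the paper's first step: both you and the authors invoke Wall's Approximation Theorem to reduce the question to hermitian forms over the semisimple quotient $\underline{A}\cong K\oplus K'$ with the swap involution. Where you diverge is in how the uniqueness over $\underline{A}$ is established. The paper finishes by computing the norm map $N\colon (K\oplus K)^{\times}\to K^{\times}$, $N(a,b)=(ab,ab)$, observing it is surjective, and then appealing to the standard classification of hermitian forms (rank plus discriminant in the fixed field modulo norms) to conclude there is a unique form of each rank. You instead argue directly: split $\underline{V}=M_1\oplus M_2$ by the idempotents $e_1,e_2$ with $\overline{e_1}=e_2$, note that $H(e_iu,e_iv)=e_i\overline{e_i}\,H(u,v)=0$ kills the diagonal blocks, so $H$ is nothing but a non-degenerate pairing $M_1\times M_2\to K$, and any two invertible matrices over $K$ lie in one orbit under independent base changes of $M_1$ and $M_2$, which assemble into an $\underline{A}$-module automorphism. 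Your route is more elementary and self-contained: it avoids quoting the diagonalization/discriminant theory of hermitian forms over a split quadratic algebra (which the paper's one-line conclusion implicitly relies on) and in effect re-proves, in this split case, that the unitary group is $GL_n(K)$ and the form is hyperbolic and unique. The paper's route is shorter and fits a uniform template (norm-class computations) that the authors also use elsewhere, e.g.\ in counting hermitian forms over fields with property FE; your route makes the mechanism behind ``the hermitian form is unique'' explicit. Both are valid proofs of the corollary.
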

\begin{proof}
We use Wall's approximation theorem. Here $R=A$ and its Jacobson radical is 
$J=\frac{<q(x)>}{<q(x)^d>} \oplus \frac{<\tilde q(x)>}{<\tilde q(x)^d>}$. 
Then $\underline {R} =\underline{A} \cong \frac{\mathbb{F}[x]}{<q(x)>} \oplus \frac{\mathbb{F}[x]}{<\tilde q(x)>}\cong K \oplus K$ is a semisimple ring, where $K\cong \frac{\mathbb{F}[x]}{<q(x)>} \cong \frac{\mathbb{F}[x]}{<\tilde q(x)>}$ is a finite field extension of $\mathbb{F}$ (thus separable). Now we have hermitian forms 
$\underline{H_{i}}\colon  \underline{V}\times \underline{V}\rightarrow \underline{A}$ defined by 
$\underline{H_{i}}(u+JV, v+JV)= H_{i}(u,v)J$ for all $u,v \in V$. Now, the result follows from Proposition~\ref{ffhermitian} and noting that we have $n=m$ and forms are non-degenerate.
\end{proof}
   
\subsection{Unipotent elements}
We look at the Type 1 more closely, where the minimal polynomial is $p(x)^d$ with $p(x)$ an irreducible, self-U-reciprocal polynomial. This includes unipotent elements. The theory of rational canonical forms, which determines conjugacy classes in $GL(V)$, gives a decomposition of $V= \oplus_{i=1}^kV_{d_{i}}$ with $1\leq d_{1}\leq d_{2} \leq \ldots \leq d_{k}=d$ and each $V_{d_{i}}$ is a free module over $\mathbb{F}$-algebra $\frac{\mathbb{F}[x]}{<p(x)^{d_{i}}>}$ (see~\cite{SS} Chapter IV, 2.14). Thus,
\begin{proposition}\label{p-conj}
Let $S$ and $T$ be in $U(V,B)$. Suppose the minimal polynomial of both $S$ and $T$ are equal, and it equals $p(x)^d$, where $p(x)$ is an irreducible self-$U$-reciprocal polynomial. Then $S$ and $T$ are conjugate in $U(V,B)$ if and only if 
\begin{enumerate}
\item the elementary divisors $p(x)^{d_i}$, where $1\leq d_{1}\leq d_{2} \leq \ldots \leq d_{k} =d$ of $S$ and $T$ are same, i.e., they are $GL(V)$ conjugate, and
\item the sequence of non-degenerate hermitian spaces, $\left\{(V_{d_{1}}^S, H_{d_{1}}^S), \ldots,(V_{d_{k}}^S,H_{d_{k}}^S)\right\}$ corresponding to $S$ and  $\left\{(V_{d_{1}}^T, H_{d_{1}}^T),\ldots,(V_{d_{k}}^T,H_{d_{k}}^T)\right\}$ corresponding to $T$ are equivalent. Note that, $H_{d_{i}}^S$ and $H_{d_{i}}^T$ take values in the cyclic $\mathbb{F}$-algebra $\frac{\mathbb{F}[x]}{<p(x)^{d_{i}}>}$.
\end{enumerate}
Further, the centralizer of $T$, in this case, is the direct product $\mathcal Z_{U(V,B)}(T)=\displaystyle{\prod_{i=1}^k} U(V_{d_{i}}^T, H_{d_{i}}^T)$.
\end{proposition}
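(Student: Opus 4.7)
The plan is to reduce the conjugacy classification to an equivalence problem for hermitian modules over the local ring $R=\mathbb{F}[x]/\langle p(x)^d\rangle$ equipped with the involution $\alpha$ extending $\sigma$ and sending $x$ to $x^{-1}$, and then to disentangle the elementary-divisor blocks orthogonally. Since $S$ and $T$ have the same minimal polynomial, $E^S\cong E^T\cong R$ canonically via $x\mapsto x$, so Proposition~\ref{conjugacy} says that the conjugacy of $S$ and $T$ in $U(V,B)$ is equivalent to the existence of an $R$-linear isometry $V^S\to V^T$ intertwining $H^S$ and $H^T$. The module structure theorem already gives that $V^S\cong V^T$ as $R$-modules iff their elementary-divisor sequences $p(x)^{d_i}$ coincide, so what remains is to carry the hermitian data along.

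The technical heart is the claim that every non-degenerate hermitian $R$-module $(V^T,H^T)$ admits an orthogonal decomposition $V^T=\bigoplus_{i=1}^{k} V_{d_i}^T$ in which each $V_{d_i}^T$ is free over $R_{d_i}:=\mathbb{F}[x]/\langle p(x)^{d_i}\rangle$ with induced hermitian form $H_{d_i}^T$. I would prove this by induction on $d_k=d$: choose a cyclic $R$-summand $W$ of largest elementary divisor $p(x)^{d_k}$ and show that $H^T|_W$ is non-degenerate. For this I would exploit the filtration by powers of the Jacobson radical $\langle p(x)\rangle$, which is preserved by $\alpha$, together with non-degeneracy of $H^T$, to choose a generator $v$ of $W$ for which $H^T(v,v)$ is a unit in $R$; then $V^T=W\oplus W^{\perp}$ with $H^T|_{W^{\perp}}$ again non-degenerate, and the induction hypothesis applies to $W^{\perp}$. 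Regrouping isomorphic summands yields the required free $R_{d_i}$-components, and Wall's Approximation Theorem applied to each $R_{d_i}$ (whose residue field is a finite separable extension of $\mathbb{F}$) shows that the hermitian form on each component is determined by its residue-field reduction.

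Given this decomposition both directions are then routine. For necessity, a conjugating element in $U(V,B)$ induces an $R$-module isometry $V^S\to V^T$ which must send each elementary-divisor component to its counterpart and restricts to an isometry there, so the sequences of elementary divisors and the pairs $(V_{d_i}^S,H_{d_i}^S)$ agree with those of $T$. For sufficiency, combining the hypothesized component-wise equivalences via an orthogonal direct sum produces a global isometry $V^S\to V^T$, and one appeals to Proposition~\ref{conjugacy} again. The centralizer formula $\mathcal{Z}_{U(V,B)}(T)=\prod_{i=1}^{k}U(V_{d_i}^T,H_{d_i}^T)$ is then an immediate consequence of the orthogonality of the decomposition together with the identification $\mathcal{Z}_{U(V,B)}(T)=U(V^T,H^T)$ already supplied by Proposition~\ref{conjugacy}. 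The principal obstacle I foresee is constructing the orthogonal splitting: the direct sum decomposition coming from the rational canonical form is not intrinsic, and one must carefully select representatives compatible with $\alpha$, which is precisely what the inductive unit-value step above is designed to achieve.
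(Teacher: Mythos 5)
The paper itself offers no proof of this proposition: it is imported from Springer--Steinberg (2.14, Chapter 4) together with Proposition~\ref{conjugacy}, and your overall route --- reduce via Proposition~\ref{conjugacy} to equivalence of hermitian modules over $R=\mathbb{F}[x]/\langle p(x)^d\rangle$ with the involution $\alpha$, and build an orthogonal splitting into free $R_{d_i}$-pieces by repeatedly splitting off a cyclic summand of maximal length on which the form is unimodular --- is exactly the standard argument behind that citation. Two small repairs to that part: the induction must run on $\dim_{\mathbb F}V$, not on $d$, since $W^{\perp}$ may still contain the top elementary divisor $p(x)^d$; and the existence of a maximal-length generator $v$ with $H^T(v,v)$ a unit uses non-degeneracy together with a trace/characteristic $\neq 2$ argument, which you only gesture at but which does go through.

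There are, however, two genuine gaps. First, in the necessity direction the assertion that a conjugating isometry ``must send each elementary-divisor component to its counterpart'' is false: the subspaces $V_{d_i}$ are not canonical, and an isometry merely carries one orthogonal decomposition to another. What is actually needed is that the equivalence classes of the pairs $(V_{d_i},H_{d_i})$ are independent of the chosen decomposition (a Witt-cancellation/uniqueness statement, which is the real content of the Springer--Steinberg/Wall classification); your sketch does not establish this, though it can be extracted by realizing the residue forms on intrinsic subquotients built from $\ker p(T)^{j}\cap p(T)^{j-1}V$ and then applying your Wall-reduction observation. Second, the centralizer formula is not ``an immediate consequence of orthogonality'': an element of $U(V^T,H^T)$ need not preserve the chosen decomposition, and when the $d_i$ are not all equal there are nontrivial $T$-equivariant maps between components compatible with $H^T$, so $\mathcal{Z}_{U(V,B)}(T)$ is strictly larger than $\prod_i U(V_{d_i}^T,H_{d_i}^T)$; compare the paper's own later structure formula $\mathcal{Z}_{U_n(q)}(u)=Q\prod_i U_{a_i}(q)$ with nontrivial unipotent radical $Q$ (for Jordan type $(2,1)$ in $U_3(q)$ the centralizer has order $q^3(q+1)^2$, while the product of the unitary groups of the two components has order $q(q+1)^2$). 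So this last step of your argument fails as written: the product description is valid only when all $d_i$ coincide, or after passing to the reductive quotient, and any correct treatment must account for the Hom-terms between blocks of different sizes rather than deduce the product from orthogonality of the splitting alone.
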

\noindent This gives us the following,
\begin{corollary}\label{zunipotent}
Let $\mathbb F_0$ has the property {\rm FE}. Then, the number of conjugacy classes of unipotent elements in $U(V, B)$ is finite. And hence, the number of $z$-classes of unipotent elements in $U(V, B)$ is finite.
\end{corollary}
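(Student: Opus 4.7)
The plan is to invoke the preceding proposition with the irreducible self-U-reciprocal polynomial $p(x) = x - 1$ (note that $1\cdot \bar 1 = 1$, so $p$ qualifies). A unipotent $T \in U(V,B)$ has minimal polynomial $(x-1)^d$ for some $d$, so the proposition tells us that the conjugacy class of $T$ is determined by the sequence of elementary divisors $(x-1)^{d_1},\ldots,(x-1)^{d_k}$ with $1 \leq d_1 \leq \cdots \leq d_k = d$, together with the equivalence class of the sequence of hermitian forms $\{(V_{d_i}^T, H_{d_i}^T)\}$, where each $V_{d_i}^T$ is a free module over the local ring $R_i := \mathbb{F}[x]/\langle(x-1)^{d_i}\rangle$. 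The number of possible elementary-divisor sequences is bounded by the number of partitions of $\dim V$, which is finite. Hence it suffices to show, for each such sequence, that there are only finitely many equivalence classes of hermitian forms on the pieces $V_{d_i}^T$.

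Next I would apply Wall's Approximation Theorem to each $R_i$. The Jacobson radical is $J_i = \langle x-1\rangle/\langle(x-1)^{d_i}\rangle$ and the residue field is $R_i/J_i \cong \mathbb{F}$. The involution $\alpha$ sends $x$ to $x^{-1}$, and modulo $J_i$, where $x$ collapses to $1$, the induced involution on $\mathbb{F}$ is just $\sigma$. By Wall's theorem, two non-degenerate hermitian forms on a free $R_i$-module of fixed rank are equivalent if and only if the induced forms on $V_{d_i}^T/J_i V_{d_i}^T$, viewed with respect to $\sigma$, are equivalent. This reduces the task to counting non-degenerate $\sigma$-hermitian forms of each prescribed rank over $\mathbb{F}$.

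This final step is where I expect the main delicate point to lie. Non-degenerate hermitian forms of rank $r$ over $\mathbb{F}$ (with respect to $\sigma$, characteristic not $2$) are classified by the discriminant taking values in $\mathbb{F}_0^\times / N_{\mathbb{F}/\mathbb{F}_0}(\mathbb{F}^\times)$, so it is enough to show this quotient is finite under FE. I would argue via Kummer theory: since $\mathbb{F}_0$ is a perfect field of characteristic $\neq 2$, its quadratic extensions are in bijection with the non-trivial classes in $\mathbb{F}_0^\times/(\mathbb{F}_0^\times)^2$, so property FE forces $\mathbb{F}_0^\times/(\mathbb{F}_0^\times)^2$ to be finite. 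The identity $a^2 = a\bar a = N_{\mathbb{F}/\mathbb{F}_0}(a)$ for $a \in \mathbb{F}_0^\times$ gives $(\mathbb{F}_0^\times)^2 \subseteq N_{\mathbb{F}/\mathbb{F}_0}(\mathbb{F}^\times)$, so $\mathbb{F}_0^\times/N_{\mathbb{F}/\mathbb{F}_0}(\mathbb{F}^\times)$ is a quotient of a finite group, hence finite.

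Combining the three steps yields finitely many conjugacy classes of unipotent elements in $U(V,B)$. The statement about $z$-classes then follows immediately: $z$-equivalence is coarser than conjugacy, so every $z$-class is a union of conjugacy classes, and the number of $z$-classes is at most the number of conjugacy classes, hence finite.
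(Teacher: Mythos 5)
Your proof is correct and follows essentially the same route as the paper: invoke the preceding proposition with $p(x)=x-1$, reduce via Wall's Approximation Theorem to hermitian forms over the residue field $\mathbb{F}$ with involution $\sigma$, and use property FE to conclude there are only finitely many such forms of each rank. The only (minor) difference is in how that last finiteness is justified: the paper asserts it here and later (Corollary~\ref{zsemisimpleFE}) appeals to Jacobson's theorem relating hermitian forms to quadratic forms over $\mathbb{F}_0$, while you use the rank-and-discriminant classification together with the inclusion $(\mathbb{F}_0^\times)^2\subseteq N_{\mathbb{F}/\mathbb{F}_0}(\mathbb{F}^\times)$ and Kummer theory, which is equally valid.
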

\begin{proof}
We use the above Proposition~\ref{p-conj}. A unipotent element has minimal polynomial $(x-1)^d$ for some $d$, that is, we have $p(x)= x-1$. Then, the conjugacy classes of unipotents correspond to a sequence 
$1\leq d_{1}\leq d_{2} \leq \ldots \leq d_{k} =d$, and non-degenerate hermitian spaces 
$\{(V_{d_1}^T, H_{d_1}^T), \ldots, (V_{d_{k}}^T, H_{d_{k}}^T)\}$ up to equivalence. 
Now $\underline {E}_{d_{i}}^T=\frac{\mathbb{F}[T]}{<T-1>} \cong \mathbb{F}$. 
Then, by the Wall's approximation theorem, the number of non-equivalent non-degenerate hermitian forms $(V, B)$ is exactly equal to the number of non-equivalent hermitian forms $(\underline{V}, \underline{B})$. From Proposition~\ref{Jacobson}, we know that there are only finitely many non-equivalent hermitian forms over $\mathbb{F}$. 
Thus $H_{d_i}^T$ has only finitely many choices for each $i$. Hence the result.
\end{proof}
   
\section{$z$-classes in Unitary Groups and fields with property FE}
A unitary group is an algebraic group defined over $\mathbb F_0$. 
Since we are working with perfect fields, an element $T\in U(V,B)$ 
has a Jordan decomposition, $T=T_{s}T_{u}=T_uT_s$, where $T_{s}$ is semisimple and $T_{u}$ is unipotent. 
Further one can use this to compute the centralizer $\mathcal Z_{U(V,B)}(T)=
\mathcal Z_{U(V,B)}(T_{s})\cap \mathcal Z_{U(V,B)}(T_{u})$. So, the Jordan decomposition 
helps us reduce the study of conjugacy and computation of the centralizer of an element to the study 
of that of its semisimple and unipotent parts. In this section, we analyze semisimple elements and then we prove our main theorem.

\subsection{Semisimple $z$-classes}
Let $T\in U(V,B)$ be a semisimple element. First, we begin with a basic case.
\begin{lemma}\label{zself}
Let $T\in U(V,B)$ be a semisimple element such that the minimal polynomial is either $p(x)$, which is irreducible, self-U-reciprocal or $q(x)\tilde q(x)$, where $q(x)$ is irreducible not self-U-reciprocal. Let $E=\frac{\mathbb{F}[x]}{<p(x)>}$ in the first case and $\frac{\mathbb{F}[x]}{<q(x)>}$ in the second case. Then, the $z$-class of $T$ is determined by the following:
\begin{enumerate}
\item the algebra $E$ over $\mathbb F$, and
\item the equivalence class of $E$-valued hermitian form $H^T$ on $V^T$.
\end{enumerate}
\end{lemma}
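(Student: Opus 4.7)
The plan is to establish the characterization in both directions by combining Proposition~\ref{conjugacy} with the observation that a $z$-class is insensitive to replacing $T$ by any other unitary element that generates the same subalgebra $E^T\subset \mathrm{End}_{\mathbb{F}}(V)$.

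For the ``if'' direction, suppose conditions (1) and (2) hold. Fix an isomorphism $\tau\colon (E^T,\alpha^T)\to (E^S,\alpha^S)$ of $\mathbb{F}$-algebras with involution and let $T'$ be the operator on $V$ given by multiplication by $\tau(x)\in E^S\subset \mathrm{End}_{\mathbb{F}}(V)$, where $V$ carries the $E^S$-module structure $V^S$. Since $\tau(x)\,\alpha^S(\tau(x))=1$, the operator $T'$ preserves $B$; and since $\mathbb{F}[T']=E^S=\mathbb{F}[S]$ as subalgebras of $\mathrm{End}_{\mathbb{F}}(V)$, we have $\mathcal{Z}_{U(V,B)}(T')=\mathcal{Z}_{U(V,B)}(S)$. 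Moreover $T$ and $T'$ share the same minimal polynomial, satisfy $E^T\cong E^{T'}$ via $\tau$, and have $(V^T,H^T)\cong (V^{T'},H^{T'})=(V^S,H^S)$ by hypothesis (2). Proposition~\ref{conjugacy} then makes $T$ and $T'$ conjugate in $U(V,B)$, so chaining $\mathcal{Z}(T)\sim \mathcal{Z}(T')=\mathcal{Z}(S)$ yields $T\sim_z S$.

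For the ``only if'' direction, assume $\mathcal{Z}(T)=g\mathcal{Z}(S)g^{-1}$ and replace $S$ by $gSg^{-1}$ (which is conjugate to $S$, hence has isomorphic $E$ and equivalent hermitian form), so that we may assume $\mathcal{Z}(T)=\mathcal{Z}(S)$. Then $S$ centralizes all of $\mathcal{Z}(T)$, so $S$ sits in the center of $\mathcal{Z}(T)=U(V^T,H^T)$: in Type~1 this center equals $\{\lambda\in E^T:\lambda\alpha(\lambda)=1\}$, and in Type~2 we first identify $U(V^T,H^T)$ with $GL_{E}(V_q)$ via Corollary~\ref{Wall's} and then note its center is again the norm-one scalars in $E^T$. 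Either way $S$ acts on $V$ as multiplication by some $\lambda\in E^T$. If $\mathbb{F}[\lambda]$ were properly contained in $E^T$, its centralizer in $U(V,B)$ would strictly contain $\mathcal{Z}(T)=\mathcal{Z}(S)$, a contradiction; hence $E^S=\mathbb{F}[\lambda]=E^T$ as subalgebras with common $B$-adjoint involution, and $(V^T,H^T)=(V^S,H^S)$ as $E$-modules with hermitian form, establishing (1) and (2). The delicate step is identifying the center of $\mathcal{Z}(T)$ and arguing that $\lambda$ must generate $E^T$; Type~2 requires Corollary~\ref{Wall's} to pass through the general linear group before extracting the center, but the remaining manipulations are routine bookkeeping around Proposition~\ref{conjugacy}.
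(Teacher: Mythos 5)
Your argument is correct, and in both directions it is more explicit than the paper's own proof, which disposes of the lemma in a few lines: after conjugating so that $\mathcal Z_{U(V,B)}(S)=\mathcal Z_{U(V,B)}(T)$, the paper passes directly from the equality of subgroups $U(V^S,H^S)=U(V^T,H^T)$ to the equivalence of the hermitian spaces (and hence $E^S\cong E^T$), and for the converse it simply cites Proposition~\ref{conjugacy}. You replace the forward step by a center-of-the-centralizer argument: $S$ lies in the center of $U(V^T,H^T)$, hence acts as a norm-one element $\lambda\in E^T$, and you then argue $\mathbb F[\lambda]=E^T$. You replace the bare citation in the converse by the auxiliary unitary element $T'$ acting as $\tau(x)$; this extra step is genuinely needed, because the algebra isomorphism in hypothesis (1) need not send $x$ to $x$, so $S$ and $T$ need not be conjugate even though they are $z$-equivalent, and your $T'$ is exactly the bridge: $T\sim T'$ by Proposition~\ref{conjugacy} (same minimal polynomial, equivalent forms), while $\mathcal Z_{U(V,B)}(T')=\mathcal Z_{U(V,B)}(S)$ because $\mathbb F[T']=\mathbb F[S]$. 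What your route buys is an actual derivation of the invariants from the group $\mathcal Z_{U(V,B)}(T)$; what the paper's route buys is brevity, at the cost of leaving the key forward implication essentially asserted.

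Two of your assertions deserve attention. The identification of the center of $U(V^T,H^T)$ with the norm-one elements of $E^T$ is classical (reflections along anisotropic lines in Type 1; in Type 2 the unitary group of a form over $K\times K$ with the switch involution is canonically $GL_K(V_q)$ --- this identification is elementary and does not use Corollary~\ref{Wall's}, which concerns uniqueness of the form, not the shape of the group). More importantly, your claim that $\mathbb F[\lambda]\subsetneq E^T$ would force $\mathcal Z_{U(V,B)}(S)$ to \emph{strictly} contain $\mathcal Z_{U(V,B)}(T)$ is true but not obvious: it amounts to comparing a unitary group over a proper $\alpha$-stable subalgebra with one over $E^T$ on the same space, and needs a dimension or spanning argument. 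It is cleaner to avoid it by symmetry: since $\mathcal Z_{U(V,B)}(S)=\mathcal Z_{U(V,B)}(T)$, the element $T$ likewise lies in the center of $U(V^S,H^S)$, hence acts as a norm-one element of $E^S=\mathbb F[\lambda]$, giving $E^T=\mathbb F[T]\subseteq\mathbb F[\lambda]\subseteq E^T$ and equality with no strictness needed. Finally, you (rightly) read hypothesis (1) as giving an isomorphism of algebras \emph{with involution}; this is how (1)--(2) must be interpreted for the lemma to make sense, since the hermitian form in (2) is only defined relative to $\alpha$.
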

\begin{proof} 
This is the case of regular semisimple element (see~\cite{FG} Section 3). Suppose $S, T \in U(V,B)$ are in the same $z$-class, then 
$\mathcal Z_{U(V,B)}(S)=g\mathcal Z_{U(V,B)}(T)g^{-1}$ for some $g\in U(V,B)$. 
We may replace $T$ by its conjugate $gTg^{-1}$, so we get 
$\mathcal Z_{U(V,B)}(S)=\mathcal Z_{U(V,B)}(T)$ which is a maximal torus.  
Since $T$ is semisimple and regular (the condition on the minimal polynomial gives this) $\mathcal Z_{End(V)}(T)\cong E$ in the first case and $E\times E$ in the second case, which consists of essentially polynomials in $T$.
Thus, $U(V^T, H^T)=\mathcal Z_{End(V)}(T) \cap U(V,H^T)$ and hence $T$ uniquely determines $E$ and $H^T$. The converse follows from Proposition~\ref{conjugacy}.
\end{proof}
Now for the general case, let $T\in U(V,B)$ be a semisimple element with minimal polynomial written as in Equation~\ref{u-rec}
$$m_{T}(x)=\prod_{i=1}^{k_1} p_{i}(x) \prod_{j=1}^{k_{2}}\left(q_{j}(x)\tilde{q}_{j}(x)\right),$$ 
where $p_{i}(x)$ are irreducible, self-U-reciprocal polynomials of degree $m_i$ and $q_{j}(x)$ are irreducible but not self-U-reciprocal of degree $l_j$. 
Let the characteristic polynomial of $T$ be $$\chi_{T}(x)=\prod_{i=1}^{k_1} p_{i}(x)^{d_i} \prod_{j=1}^{k_{2}}\left(q_{j}(x)\tilde{q}_{j}(x)\right)^{r_j}.$$
Let us write the primary decomposition of $V$ with respect to $m_T$ into $T$-invariant subspaces as 
\begin{equation}
V=\bigoplus_{i=1}^{k_1} V_{i}\bigoplus\bigoplus_{j=1}^{k_{2}}\left(W_{j}+ \tilde W_{j}\right).
\end{equation}
Denote $E_{i}=\frac{\mathbb{F}[x]}{<p_{i}(x)>}$ and $K_{j}=\frac{\mathbb{F}[x]}{<q_{j}(x)>}$ the field extensions of $\mathbb{F}$ of degree $m_{i}$ and $l_{j}$ respectively.
\begin{theorem}\label{zsemisimple}
With notation as above, let $T\in U(V,B)$ be a semisimple element. Then the $z$-class of $T$ is determined by the following:
\begin{enumerate}
\item A finite sequence of integers $(m_{1},\ldots, m_{k_{1}}; l_{1},\ldots, l_{k_{2}})$ each $\geq 1$ such that 
$$n = \displaystyle \sum_{i=1}^{k_1} d_{i}m_{i} + 2 \sum_{j=1}^{k_{2}}r_{j}l_{j}.$$
\item Finite field extensions $E_{i}$ of $\mathbb{F}$ of degree $m_{i}$ for $1\leq i \leq k_{1}$, and $K_{j}$ of $\mathbb{F}$ of degree $l_{j}$, for $1\leq j \leq k_{2}$.
\item Equivalence classes of $E_{i}$-valued hermitian forms $H_{i}$ of rank $d_i$ and $K_j\times K_j$-valued hermitian forms $H_{j}'$ (which is unique up to equivalence) of rank $r_j$.
\end{enumerate}
Further, with this notation, $\mathcal Z_{U(V,B)}(T)\cong \displaystyle \prod_{i=1}^{k_1} U_{d_i}(H_i) \times \prod_{j=1}^{k_2} GL_{r_j}(K_j)$.
\end{theorem}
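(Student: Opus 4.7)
The plan is to leverage the primary decomposition~(\ref{primary})
\[V = \bigoplus_{i=1}^{k_1} V_i \oplus \bigoplus_{j=1}^{k_2}(W_j + W_j^*)\]
and reduce to the two single-block situations already handled in Lemma~\ref{zself} (together with Corollary~\ref{Wall's}). Because $T$ is semisimple, the centralizer factors along this decomposition as $\mathcal Z_{U(V,B)}(T) = \prod_r \mathcal Z_{U(V_r,B_r)}(T_r)$ (the formula stated just after the Type~1/Type~2 list in Section~2.2). So for the determination of the $z$-class of $T$ it suffices to treat each block separately and then take the product.

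Block by block: on a Type~1 block the restricted minimal polynomial is $p_i(x)$, and Lemma~\ref{zself} records the $z$-class via the field $E_i = \mathbb F[x]/\langle p_i(x)\rangle$ together with the equivalence class of the $E_i$-valued hermitian form $H_i$ on $V_i^T$. Since $V_i^T$ is free of rank $d_i$ over $E_i$, Proposition~\ref{conjugacy}(2) identifies the block centralizer with $U_{d_i}(H_i)$. On a Type~2 block the relevant ring is $K_j \oplus K_j$ with the switch involution, and Corollary~\ref{Wall's} asserts that every non-degenerate hermitian form on a free module over this ring is equivalent; hence no form-data is needed and the $z$-class datum reduces to the pair $(K_j, r_j)$.

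The step I expect to need the most care is the identification of the Type~2 centralizer with $GL_{r_j}(K_j)$. Writing $V_j^T = M_1 \oplus M_2$ via the two central idempotents of $K_j \oplus K_j$, the switch involution turns the hermitian form into a non-degenerate $K_j$-bilinear pairing $M_1 \times M_2 \to K_j$; an element of the unitary group is a pair $(g_1,g_2) \in GL_{K_j}(M_1) \times GL_{K_j}(M_2)$ with $g_2$ the inverse adjoint of $g_1$, and projection to the first factor supplies the isomorphism with $GL_{r_j}(K_j)$.

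For the converse direction of the $z$-class characterization, I would follow the proof of Lemma~\ref{zself}: if $\mathcal Z_{U(V,B)}(S)$ is conjugate to $\mathcal Z_{U(V,B)}(T)$, replacing $T$ by its conjugate yields equality of centralizers and hence equivalence $(V^S, H^S) \cong (V^T, H^T)$ of $E^T$-hermitian modules; the canonical decomposition of $E^T$ into indecomposable pieces with respect to $\alpha$ (recorded after Proposition~\ref{conjugacy}) then forces agreement of the listed invariants block by block. Assembling the blocks yields the product form of the centralizer, and the dimension identity $n = \sum_i d_i m_i + 2\sum_j r_j l_j$ is just bookkeeping of $\mathbb F$-dimensions in the decomposition of $V$.
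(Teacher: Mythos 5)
Your proposal is correct and follows essentially the same route as the paper, whose proof of Theorem~\ref{zsemisimple} is just the one-line citation of Proposition~\ref{conjugacy} and Lemma~\ref{zself}; you are simply filling in the same ingredients the paper relies on implicitly (the primary decomposition, the product formula for the centralizer, Corollary~\ref{Wall's} for the Type~2 blocks, and the identification of the unitary group of a hyperbolic form over $K_j\oplus K_j$ with $GL_{r_j}(K_j)$). Your expanded treatment of the Type~2 centralizer and of the converse direction is a faithful elaboration of the argument already used in Lemma~\ref{zself}, not a different method.
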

\begin{proof}
The proof of this follows from Proposition~\ref{conjugacy} and Lemma~\ref{zself}.
\end{proof}
\begin{corollary}\label{zsemisimpleFE}
Let $\mathbb F_0$ have the property {\rm FE}. Then the number of semisimple $z$-classes in $U(V,B)$ is finite.
\end{corollary}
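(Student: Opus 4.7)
The plan is to invoke Theorem~\ref{zsemisimple} directly: a semisimple $z$-class is classified by a combinatorial tuple of integers, a tuple of field extensions of $\mathbb{F}$, and a tuple of equivalence classes of hermitian forms. It therefore suffices to show, under property FE for $\mathbb{F}_0$, that each of these three ingredients takes only finitely many values.

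For the combinatorial data, the positive integers $m_i, l_j, d_i, r_j$ satisfy $n = \sum_i d_i m_i + 2\sum_j r_j l_j$ with $n = \dim_{\mathbb F} V$ fixed, so they are bounded and only finitely many tuples occur. For the field-extension data, I would first observe that property FE descends from $\mathbb{F}_0$ to $\mathbb{F}$: an extension of $\mathbb{F}$ of degree $d$ is an extension of $\mathbb{F}_0$ of degree $2d$, and only finitely many such exist. Hence, for each prescribed $m_i$ and $l_j$, there are only finitely many choices for the $E_i$ and the $K_j$.

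The heart of the argument, and what I expect to be the main obstacle, is the finiteness of the hermitian-form data appearing in item~(3) of Theorem~\ref{zsemisimple}. For the $K_j \times K_j$-valued forms with switch involution, Corollary~\ref{Wall's} already gives uniqueness, so this contributes nothing. For each $E_i$-valued hermitian form of rank $d_i$, I would mimic the strategy of Corollary~\ref{zunipotent}. The involution $\alpha_i$ on $E_i$ has a fixed subfield $E_{i,0}$ which is a finite extension of $\mathbb{F}_0$, and property FE is inherited by finite extensions of the base (since a degree-$d$ extension of $E_{i,0}$ is a degree-$d\cdot[E_{i,0}:\mathbb{F}_0]$ extension of $\mathbb{F}_0$, of which there are finitely many by hypothesis). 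With FE for $E_{i,0}$ in hand, the classification of hermitian forms over a field with involution whose fixed field has FE---exactly the input invoked in Corollary~\ref{zunipotent} for $\mathbb{F}$ itself---yields only finitely many equivalence classes of each rank over $(E_i,\alpha_i)$.

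Combining the three finiteness assertions and multiplying over the finitely many indices $i,j$ gives the finiteness of the classifying data, and hence of semisimple $z$-classes, completing the proof.
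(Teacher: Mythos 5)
Your proposal is correct and follows essentially the same route as the paper: reduce via Theorem~\ref{zsemisimple} to finiteness of the classifying data, the only substantive point being that there are finitely many equivalence classes of hermitian forms of each rank when the relevant fixed field has property FE. The paper makes that last point explicit (via Jacobson's theorem, passing to quadratic forms over $\mathbb{F}_0$ and the finiteness of $\mathbb{F}_0^{*}/({\mathbb{F}_0^{*}})^{2}$) while leaving the extensions $E_i$ implicit, whereas you spell out the FE-inheritance for the fixed fields $E_{i,0}$ but cite the hermitian-form finiteness as the known input from Corollary~\ref{zunipotent}; either way the argument is sound.
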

\begin{proof}
This follows if we show that there are only finitely many hermitian forms over $\mathbb F$ up to equivalence of any degree $n$ which is the content of Proposition~\ref{Jacobson}. 
\end{proof}

\subsection{Proof of the Theorem~\ref{maintheorem}}\label{prooftheorem1}
It is already known that the number of $z$-classes in $GL_n(\mathbb F)$ is finite when $\mathbb F$ has the property FE (see~\cite{Ku} page 323 (iv)).
The number of conjugacy classes of centralizers of semisimple elements is finite follows from 
Corollary~\ref{zsemisimpleFE}. Hence, up to conjugacy, there are finitely many possibilities 
for $\mathcal Z_{U(V,B)}(s)$ for $s$ semisimple in $U(V,B)$. Let $T\in U(V,B)$, then it has a 
Jordan decomposition $T=T_{s}T_{u}=T_{u}T_{s}$. Recall $\mathcal Z_{U(V,B)}(T)
=\mathcal Z_{U(V,B)}(T_{s})\cap \mathcal Z_{U(V,B)}(T_{u})$ and $T_{u} \in Z_{U(V,B)}(T_{s})^{\circ}$ (see~\cite{SS} Chapter V, 3.16). 
Now, $\mathcal Z_{U(V,B)}(T_{s})$ is a product of certain unitary groups and general linear groups 
possibly over a finite extension of $\mathbb F$ (e.g. $E_i$ and $K_j$ in Theorem~\ref{zsemisimple}). Since $\mathbb F_0$ has the property FE, any finite extension of it has this property. Thus we can apply Corollary~\ref{zunipotent} to the 
group $\mathcal Z_{U(V,B)}(T_{s})$ and get, up to conjugacy, $T_{u}$ has finitely many 
possibilities in $\mathcal Z_{U(V,B)}(T_{s})$. Hence, up to conjugacy, $\mathcal Z_{U(V,B)}(T)$ 
has finitely many possibilities in $U(V,B)$. Therefore the number of $z$-classes in $U(V,B)$ is finite.

\section{Counting $z$-classes in Unitary Group}\label{counting}

We recall that there could be more than one non-equivalent non-degenerate hermitian form over 
a given field $\mathbb F$ and hence more than one non-isomorphic unitary group. In this section, we want to count the number of $z$-classes and write its generating function.
Special focus is on the unitary group over finite field $\mathbb F=\mathbb F_{q^2}$ of characteristic $\neq 2$ with $\sigma$ given by $\bar x=x^q$ and $\mathbb F_0=\mathbb F_q$. It is well-known that over a finite field there is a unique non-degenerate hermitian form, up to equivalence, thus a unique unitary group up to conjugation. We denote the unitary group by $U_n(q)=\{g\in GL_n(q^2)\mid \tra gJ\bar g=J\}$, where $J$ is an invertible hermitian matrix (for example, the identity matrix). The groups $GL_n(q)$ and $U_n(q)$ both are subgroups of $GL_n(q^2)$. It is interesting to note that the sizes of these groups as a function of $q$ can be obtained from one another by $q\leftrightarrow -q$. Ennola noted that this idea goes forward to the sizes of conjugacy classes and representations of small rank unitary groups. This came to be known as Ennola duality and later it was proved that indeed the Green's polynomial for unitary groups could be obtained this way from that of $GL_n(q)$. Thus, the representation theory of both these groups is closely related (for example see~\cite{SV, TV}). For applications in the subject of derangements see Burness and Giudici~\cite{BG}. Thus it is always useful to compare any computation for $U_n(q)$ with that of $GL_n(q)$. We begin with recording some well-known results about $GL_n$. 

\subsection{$z$-classes in general linear group}
Let $p(n)$ denote the number of partitions of $n$ with generating function 
$p(x):=\displaystyle\sum_{n=0}^{\infty} p(n)x^n = \displaystyle\prod_{i=1}^{\infty} \frac{1}{1-x^i}$. 
Let $z_{\mathbb F}(n)$ denote the number of $z$-classes in $GL_n(\mathbb F)$ and the generating function be $z_{\mathbb F}(x) := \displaystyle \sum_{n=0}^{\infty} z_{\mathbb F}(n)x^n$. 
Let us denote a partition of $n$ by $(1^{k_1} 2^{k_2}\ldots n^{k_n})$ with $n=\sum_{i} ik_i$. 
For convenience, we denote this by $n\vdash (1^{k_1} 2^{k_2}\ldots n^{k_n})$. The following result is  a consequence of the theory of Jordan canonical forms and the formula is given in~\cite{Ku} (Section 10, The absolute case). However the formula there has a printing error.
\begin{proposition}
Let $K$ be an algebraically closed field. Then, 
\begin{enumerate}
\item the number of $z$-classes of semisimple elements in $GL_n(K)$ is $p(n)$, and is equal to the number of $z$-classes of unipotent elements. 
\item The number of $z$-classes in $GL_{n}(K)$ is 
$$z_K(n) = \displaystyle \sum_{n\vdash (1^{k_1} 2^{k_2}\ldots n^{k_n})} \prod_{i=1}^{n} \binom{p(i)+k_i-1}{k_i},
$$
and the generating function is 
$$z(x):=z_K(x)=\prod_{i=1}^{\infty}\frac{1}{(1-x^i)^{p(i)}}.$$
\end{enumerate}
\end{proposition}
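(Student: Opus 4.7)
The plan is to handle both parts using the Jordan decomposition $T = T_s T_u$ and the factorization $\mathcal Z_{GL_n(K)}(T) = \mathcal Z_{GL_n(K)}(T_s) \cap \mathcal Z_{GL_n(K)}(T_u)$. For part (1), since $K$ is algebraically closed any semisimple $T$ is diagonalizable, so up to conjugation $T = \diag(\lambda_1 I_{m_1}, \ldots, \lambda_r I_{m_r})$ with distinct $\lambda_j$. Then $\mathcal Z_{GL_n(K)}(T) = \prod_{j=1}^r GL_{m_j}(K)$ as a block subgroup, and two such block-product subgroups of $GL_n(K)$ are conjugate iff their multi-sets of block sizes agree, identifying semisimple $z$-classes with partitions of $n$. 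For unipotents, Jordan canonical form parametrizes conjugacy classes by partitions of $n$, and distinct Jordan types $\lambda \neq \mu$ yield non-conjugate centralizers (for example, the reductive quotient of the centralizer of $u_\lambda$ is $\prod_i GL_{m_i(\lambda)}(K)$, which recovers $\lambda$); hence unipotent conjugacy classes coincide with unipotent $z$-classes. In both cases the count is $p(n)$.

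For part (2), given $T = T_s T_u$ with $T_s$ as above, $T_u \in \mathcal Z(T_s) = \prod_{j=1}^r GL_{m_j}(K)$ decomposes as $(u_1, \ldots, u_r)$ with each $u_j$ unipotent, and therefore $\mathcal Z_{GL_n(K)}(T) = \prod_{j=1}^r \mathcal Z_{GL_{m_j}(K)}(u_j)$. The central technical step is computing $N_{GL_n(K)}(\mathcal Z(T_s))/\mathcal Z(T_s)$: this quotient is exactly the group of permutations of the $r$ block factors preserving factor sizes, realized by block permutation matrices. Rewriting the multi-set $\{m_1, \ldots, m_r\}$ in partition form as $(1^{k_1} 2^{k_2}\cdots n^{k_n})$, the quotient becomes $\prod_i S_{k_i}$. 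Consequently two such decorated centralizers are $GL_n(K)$-conjugate iff one is obtained from the other by permuting equal-sized factors together with independent $GL_{m_j}(K)$-conjugations of the $u_j$. Hence the $z$-class of $T$ is determined by the multiplicity partition $(1^{k_1} 2^{k_2}\cdots n^{k_n})$ together with, for each size $i$, an unordered multi-set of $k_i$ unipotent $z$-classes of $GL_i(K)$.

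By part (1) each $GL_i(K)$ has $p(i)$ unipotent $z$-classes, so the number of such multi-sets of size $k_i$ is $\binom{p(i) + k_i - 1}{k_i}$. Summing over partitions $(1^{k_1} 2^{k_2}\cdots n^{k_n})$ of $n$ yields the stated formula for $z(n)$. The generating function then follows by interchanging sum and product:
\[
z(x) \;=\; \sum_{n \geq 0} z(n)\, x^n \;=\; \prod_{i=1}^{\infty} \sum_{k \geq 0} \binom{p(i) + k - 1}{k} x^{ik} \;=\; \prod_{i=1}^{\infty} \frac{1}{(1 - x^i)^{p(i)}},
\]
where the last equality uses the standard identity $\sum_{k \geq 0} \binom{p + k - 1}{k} y^k = (1 - y)^{-p}$ with $y = x^i$. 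The main obstacle is the normalizer calculation for $\mathcal Z(T_s)$ that justifies counting multi-sets rather than ordered tuples on the equal-sized blocks; once this permutation symmetry is established, the remaining work is a routine assembly of the combinatorial identity.
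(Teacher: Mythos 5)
Your overall architecture (diagonalize $T_s$, reduce to unipotent parts in the blocks, count multi-sets of unipotent $z$-classes over equal-sized blocks, then the standard $\sum_k\binom{p+k-1}{k}y^k=(1-y)^{-p}$ identity) is exactly the Jordan-canonical-form argument the paper gestures at, and the semisimple count and the generating-function manipulation are fine. However, the justification you give for the crucial unipotent step is not correct as stated. You claim that distinct Jordan types $\lambda\neq\mu$ have non-conjugate centralizers \emph{because} ``the reductive quotient of the centralizer of $u_\lambda$ is $\prod_i GL_{m_i(\lambda)}(K)$, which recovers $\lambda$.'' The reductive quotient only knows the multi-set of nonzero multiplicities $\{m_i(\lambda)\}$, not which block size each multiplicity is attached to: already in $GL_5(K)$ the types $(4,1)$ and $(3,2)$ both have reductive quotient $GL_1\times GL_1$, and in $GL_{12}(K)$ the types $(5,5,2)$ and $(6,3,3)$ have isomorphic reductive quotients \emph{and} centralizers of the same dimension, so no easy combination of these coarse invariants separates all types. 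The statement you need is still true, but it requires a different argument. A clean fix: $u$ always lies in the center of $\mathcal{Z}_{GL_n(K)}(u)$, and by the double commutant theorem for a single matrix this center is the unit group of $K[u]$. Any unipotent element of $K[u_\lambda]^{\times}$ has the form $1+c_1N+c_2N^2+\cdots$ with $N=u_\lambda-1$, hence its Jordan type is dominated by $\lambda$, with equality exactly when $c_1\neq 0$. So if $\mathcal{Z}(u_\lambda)=g\mathcal{Z}(u_\mu)g^{-1}$, then $gu_\mu g^{-1}$ is a unipotent element of $Z(\mathcal{Z}(u_\lambda))=K[u_\lambda]^{\times}$, giving $\mu\trianglelefteq\lambda$, and by symmetry $\lambda=\mu$.

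A second, smaller gap sits in part (2): from $\mathcal{Z}(T)=g\,\mathcal{Z}(T')\,g^{-1}$ you pass directly to the normalizer $N(\mathcal{Z}(T_s))/\mathcal{Z}(T_s)\cong\prod_i S_{k_i}$, but a priori $g$ need not normalize $\mathcal{Z}(T_s)$. This is repaired by noting that $\mathcal{Z}(T_s)$ is recoverable from $\mathcal{Z}(T)$ alone: again by the double commutant theorem $Z(\mathcal{Z}(T))=K[T]^{\times}$, the semisimple parts of its elements generate $K[T_s]^{\times}$, and $\mathcal{Z}_{GL_n(K)}\bigl(K[T_s]^{\times}\bigr)=\mathcal{Z}(T_s)$; hence $g$ automatically carries $\mathcal{Z}(T_s)$ to $\mathcal{Z}(T'_s)$ and your block-permutation analysis then applies, with the per-block identification of unipotent data supplied by the corrected part (1). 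With these two points patched, your count $z(n)=\sum_{n\vdash(1^{k_1}\cdots n^{k_n})}\prod_i\binom{p(i)+k_i-1}{k_i}$ and the product formula for $z(x)$ follow as you wrote them.
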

\noindent  Green computed the number of $z$-classes in $GL_n(q)$ (see~\cite{Gr} Section 1), which is the function $t(n)$ there. We list them here in our notation (see~\cite{Ku} Section 10, General case).
\begin{proposition}
Let $z(x)=\displaystyle\prod_{i=1}^{\infty}\frac{1}{(1-x^i)^{p(i)}}$. Then,
\begin{enumerate}
\item $z_{\mathbb C}(x)=z(x)$.
\item $z_{\mathbb R}(x)= z(x)z(x^2)$.
\item When $q>n$ the function $\displaystyle\prod_{i=1}^{\infty} z(x^i)$ computes $z_{\mathbb F_q}(n)$.
\end{enumerate}
\end{proposition}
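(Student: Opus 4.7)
The plan is to derive all three identities from a single master formula for $z_K(x)$ valid over any field $K$ with property FE, and then evaluate at $K = \mathbb{C},\ \mathbb{R},\ \mathbb{F}_q$. The strategy parallels the semisimple/unipotent decomposition analysis of the preceding sections, but applied to $GL_n$.

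First I would set up the combinatorial parametrization of $z$-classes in $GL_n(K)$. For $T \in GL_n(K)$, the rational canonical form gives a primary decomposition $V = \bigoplus_i V_i$ in which each $V_i$ is a $K[x]$-module annihilated by a power of an irreducible polynomial $p_i(x)$, with elementary divisor partition $\lambda^{(i)}$. The centralizer factors as
\begin{equation*}
\mathcal Z_{GL_n(K)}(T) = \prod_i \Aut_{K[x]}(V_i),
\end{equation*}
and each factor, as a subgroup of $GL(V_i)$ up to $GL(V_i)$-conjugacy, depends only on the isomorphism class of the residue field $E_i := K[x]/(p_i(x))$ together with the partition $\lambda^{(i)}$. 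Hence the $z$-class of $T$ is recorded exactly by the multiset $\{(E_i,\lambda^{(i)})\}$, where we identify $K$-isomorphic residue fields, subject to $\sum_i [E_i:K]\cdot|\lambda^{(i)}| = n$; conversely every such multiset is realized.

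Next I would package this as a generating function. Writing $a_m(K)$ for the number of isomorphism classes of degree-$m$ extensions of $K$, and summing over all types $(E,\lambda)$ with nonnegative multiplicities, one gets
\begin{equation*}
z_K(x) = \prod_{(E,\lambda)} \frac{1}{1 - x^{[E:K]\cdot|\lambda|}} = \prod_{m \geq 1} z(x^m)^{a_m(K)},
\end{equation*}
using $\prod_{\lambda}(1-y^{|\lambda|})^{-1} = \prod_{i\geq 1}(1-y^i)^{-p(i)} = z(y)$, which merely groups partitions by size. Specialization is then immediate: $a_m(\mathbb{C}) = 1$ for $m=1$ and $0$ otherwise gives (1); $a_1(\mathbb{R}) = a_2(\mathbb{R}) = 1$ with $a_m(\mathbb{R}) = 0$ for $m \geq 3$ gives (2); and the unique degree-$m$ extension $\mathbb{F}_{q^m}/\mathbb{F}_q$ gives $a_m(\mathbb{F}_q) = 1$ for every $m \geq 1$ and hence (3).

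The main obstacle is the first step, namely justifying that $z$-equivalence collapses primary components whose irreducible polynomials differ only by having $K$-isomorphic residue fields. One direction is straightforward: if two elements share the $(E,\lambda)$-multiset, each factor $\Aut_{K[x]}(V_i)$ can be realized on a chosen $K$-subspace of $V$ using any irreducible $p$ with $K[x]/(p) \cong E$, so after moving the decompositions onto a common splitting the centralizers coincide. The other direction requires recovering the multiset from the conjugacy type of the centralizer itself: its decomposition into indecomposable factor groups, the residue field read off from each factor's tautological representation on the corresponding $V_i$, and the partition read off from the unipotent radical filtration, are all intrinsic invariants of the subgroup. This is essentially the $GL_n$ analysis of Kulkarni~\cite{Ku}, which one may cite as a black box or reprove directly by the same rational-canonical-form plus Wall approximation argument that powers Section~2.
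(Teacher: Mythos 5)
Your derivation is correct in substance, but it is a genuinely different (and more self-contained) route than the paper takes: the paper offers no proof of this proposition at all --- the $\mathbb{C}$ case is attributed to Jordan canonical form via the preceding proposition, and the $\mathbb{R}$ and $\mathbb{F}_q$ formulas are simply quoted from Green's computation of the type number $t(n)$ in \cite{Gr} (with \cite{Ku} in the background). You instead prove a single master formula $z_K(x)=\prod_{m\geq 1} z(x^m)^{a_m(K)}$, where $a_m(K)$ is the number of degree-$m$ extensions of $K$, by parametrizing $z$-classes of $GL_n(K)$ by multisets of pairs $(E,\lambda)$ and then grouping partitions by size; this makes the dependence on the arithmetic of $K$ transparent, specializes instantly to all three cases, reproduces the paper's table, and would extend verbatim to any field with property FE (e.g.\ local fields). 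The one step you rightly flag as the crux --- that the multiset $\{(E_i,\lambda^{(i)})\}$ is a complete invariant of the centralizer up to conjugacy --- is exactly Kulkarni's $GL_n$ analysis, and citing \cite{Ku} there is consistent with how the paper itself leans on \cite{Gr}; note only that the independence of $\Aut_{K[x]}(V_i)$ from the choice of $p_i$ uses $K[x]/(p_i^d)\cong E_i[t]/(t^d)$, which needs separability (automatic here, all three fields being perfect).

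One caveat you should make explicit: your sentence ``conversely every such multiset is realized'' is unconditionally true over $\mathbb{C}$ and $\mathbb{R}$, but over $\mathbb{F}_q$ it requires enough distinct irreducible polynomials of each degree with the prescribed residue field, which can fail when $q$ is small relative to $n$ (e.g.\ a type calling for three distinct eigenvalues in $\mathbb{F}_3^{\times}$ inside $GL_3(\mathbb{F}_3)$). Thus item (3), read literally as counting $z$-classes, is a statement about Green's ``types'' and holds verbatim only for $q$ sufficiently large; this convention is implicit in the paper as well, so your proof matches the intended statement, but the assumption deserves a sentence rather than silence.
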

Thus if we treat $z_{\mathbb F_q}(x)= \displaystyle\prod_{i=1}^{\infty} z(x^i)$, 
this proposition beautifully reflects the arithmetic nature of the field. 
To compare these numbers we make a table for small rank. The last row of this table is there in the work of Green.
\vskip3mm
\begin{center}
\begin{tabular}{|l|l|l|l|l|l|l|l|l|l|l|}
\hline
$z_k(n)$         &$z(1)$&$z(2)$&$z(3)$&$z(4)$&$z(5)$&$z(6)$&$z(7)$&$z(8)$&$z(9)$&$z(10)$ \\ \hline
$\mathbb{C}$   &1&3&6&14&27&58&111&223&424&817 \\ \hline
$\mathbb{R} $  &1&4&7&20&36&87&162&355&666&1367 \\ \hline 
$\mathbb F_{q}, q>n$        &1&4&8&22&42&103&199&441&859&1784 \\ 
\hline
\end{tabular}
\end{center}
\vskip2mm

\subsection{$z$-classes in hyperbolic unitary group}
In geometry the unitary groups used are over $\mathbb C$. Let $V$ be a vector space over $\mathbb C$ of dimension $n+1$. Hermitian forms are classified by signature (as in the case of quadratic forms over $\mathbb R$) and the corresponding groups are denoted as $U(r,s)$, where $r+s=n+1$. The group given by the identity matrix is compact unitary group denoted as $U_{n+1}(I_{n+1})=U(n+1,0)$. The genus number (which is the number of $z$-classes) of the compact special unitary group has been computed in~\cite{Bo} (see Theorem 3.1). We record the result here as follows:
\begin{proposition}
The number of $z$-classes in $U_{n+1}(I_{n+1})$ is $p(n+1)$.
\end{proposition}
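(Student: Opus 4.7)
The plan is to invoke Theorem~\ref{zsemisimple} with $\mathbb{F}=\mathbb{C}$, the involution given by complex conjugation, $\mathbb{F}_0=\mathbb{R}$, and $B$ the positive-definite form $I_{n+1}$. The first step is to observe that every $T\in U_{n+1}(I_{n+1})$ is semisimple with all eigenvalues on the unit circle: the spectral theorem diagonalises unitary operators, and $|\lambda|=1$ is equivalent to $\lambda\bar\lambda=1$. Hence the minimal polynomial of $T$ factors into distinct linear, self-U-reciprocal factors $x-\lambda_i$; no Type~2 factors $q(x)\tilde q(x)$ can appear, and each irreducible piece has $m_i=1$ with $E_i=\mathbb{C}$. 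In particular, the problem reduces entirely to the semisimple setting, so there is no unipotent piece to handle separately.

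Next, I would read off the $z$-class invariants from Theorem~\ref{zsemisimple}. Since all $E_i$ collapse to $\mathbb{C}$ and there are no $K_j$ terms, the remaining data is the tuple of eigenspace dimensions $d_1,\ldots,d_{k_1}$ with $\sum_i d_i=n+1$, together with the equivalence classes of $\mathbb{C}$-valued hermitian forms $H_i$ of rank $d_i$. A priori the $H_i$ vary through their signature classes, so the central point is to pin them down. Because eigenspaces of a unitary operator for distinct eigenvalues are $B$-orthogonal, the restriction of the positive-definite $B$ to each eigenspace is itself positive definite; tracing through the Springer--Steinberg construction, each $H_i$ is thus positive definite, and hence is the unique equivalence class of that rank.

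Combining these reductions, the $z$-class of $T$ is parametrised by the unordered multiset $\{d_1,\ldots,d_{k_1}\}$, i.e.\ a partition of $n+1$. Conversely, every such partition arises: take any diagonal unitary matrix with $k_1$ distinct unit-modulus eigenvalues of multiplicities $d_i$; by Theorem~\ref{zsemisimple} its centralizer is $U(d_1)\times\cdots\times U(d_{k_1})$, and two such centralizers are conjugate in $U(n+1)$ precisely when the multiplicity sequences agree up to permutation. This produces a bijection between $z$-classes and partitions of $n+1$, yielding the count $p(n+1)$.

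The one place that needs genuine care is the signature argument: I expect the main obstacle to be cleanly extracting from the Springer--Steinberg setup the fact that positive-definiteness of the ambient $B$ forces each $H_i$ to be positive definite, eliminating the other signature classes that would otherwise contribute. Once that is in hand, the statement follows immediately from Theorem~\ref{zsemisimple} together with the observation that every element of the compact unitary group is semisimple.
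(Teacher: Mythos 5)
Your argument is correct, but it follows a genuinely different route from the paper: the paper gives no proof of this proposition at all, recording it as a consequence of Bose's computation of genus numbers for compact simple Lie groups (Theorem 3.1 of \cite{Bo}), whereas you derive it self-contained from Theorem~\ref{zsemisimple}. Your key reductions are all sound: every element of the compact group is semisimple with unit-modulus eigenvalues, so only Type 1 linear factors $x-\lambda$ with $\lambda\bar\lambda=1$ occur, $E_i=\mathbb{C}$, and positive definiteness of $B$ collapses the hermitian-form invariants, leaving only the partition $\{d_1,\dots,d_{k_1}\}$ of $n+1$. Two small remarks. First, rather than tracing the functional $l$ through the Springer--Steinberg construction, it is cleaner to note directly that $\mathcal Z_{U(n+1)}(T)=\prod_i U\bigl(V_{\lambda_i},B|_{V_{\lambda_i}}\bigr)\cong\prod_i U(d_i)$, since a unitary map commuting with $T$ preserves eigenspaces and $B$ restricts to a positive definite form on each; this sidesteps the sign ambiguity in $l$ (when $E_i=\mathbb{C}$ the condition on $l$ only fixes it up to a real scalar, so $H_i$ could come out negative definite --- harmless, since it is still the unique class of its rank arising here, but your phrase ``each $H_i$ is thus positive definite'' is slightly stronger than what the construction guarantees). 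Second, the converse step --- that $\prod_i U(d_i)$ and $\prod_j U(d_j')$ are conjugate in $U(n+1)$ only if the multisets of $d_i$ agree --- deserves a one-line justification, e.g.\ by comparing the simple factors and the rank of the center, or by recovering the $d_i$ as the dimensions of the isotypic subspaces of the center acting on $\mathbb{C}^{n+1}$. What your approach buys is a short, elementary proof covering $U_{n+1}$ itself (Bose's result is stated for the compact \emph{special} unitary group), tied directly to the machinery the paper has already set up; what the paper's citation buys is brevity and placement of the result in the broader genus-number computations for compact simple Lie groups.
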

The centralizers and $z$-classes in $U(n,1)$ have been described by Cao and Gongopadhyay in~\cite{CG} (see Section 4). However, they have not done explicit counting. Keeping in mind the spirit of this section we enumerate the number of $z$-classes in this group as well. We briefly recall the notation here and urge the interested reader to see the source and references therein for details. 
Let $V$ be an $(n+1)$-dimensional vector space over $\mathbb C$ with the hermitian form given by $\beta=\begin{pmatrix} -1 & 0 \\ 0 & I_{n} \end{pmatrix}$ which is of signature $(n,1)$. The unitary group is $U(n,1)=\{g \in GL(n+1,\mathbb{C}) \mid \tr{\bar{g}}\beta g=\beta\}$. 
Let $V_{-}$ be the vectors of negative length in $V$. The image of $V_{-}$ in the projective space $\mathbb P(V)$ is denoted as $\mathfrak H_{\mathbb C}^n$ which is a complex hyperbolic space. An element of the group is called elliptic if it has a fixed point on $\mathfrak H_{\mathbb C}^n$ and is said to be parabolic, respectively, hyperbolic, if it has exactly one, respectively, exactly two fixed points on the boundary of $\mathfrak H_{\mathbb C}^n$. Every element falls in one of these three classes. 
Using conjugation classification we know that if an element $g \in U(n,1)$ is elliptic or hyperbolic, 
then they are always semisimple. But a parabolic element need not be semisimple. 
However it has a Jordan decomposition $g=g_{s}g_{u}$, where $g_{s}$ is elliptic, hence semisimple, 
and $g_{u}$ is unipotent. In particular, if a parabolic isometry is unipotent, then it has minimal 
polynomial $(x-1)^2$ or $(x-1)^3$ and is called vertical translation or non-vertical translation respectively. The centralizers of these elements are described in~\cite{CG} Corollary 1.2 and we present the counting for $z$-classes here.
\begin{proposition}
In the group $U(n,1)$, 
\begin{enumerate}
\item the number of $z$-classes of elliptic elements is $\displaystyle \sum_{m=1}^{n+1} p(n+1-m)$.
 \item The number of $z$-classes of hyperbolic elements is $p(n-1)$.
 \item The number of $z$-classes of parabolic elements is $2+p(n-1)+p(n-2)$, in this case $n\geq 2$.
\end{enumerate}
\end{proposition}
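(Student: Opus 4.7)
The plan is to apply Theorem~\ref{zsemisimple} to the elliptic and hyperbolic cases, both of which are semisimple in $U(n,1)$, and to combine Jordan decomposition $T=T_sT_u$ with the unipotent classification (Corollary~\ref{zunipotent}) for the parabolic case. Over the scalars $\mathbb{C}$ every irreducible polynomial is linear and non-degenerate hermitian forms are classified by signature, so the combinatorial bookkeeping becomes explicit. Since the signature of $(V,B)$ is $(n,1)$, in any $T$-invariant non-degenerate orthogonal decomposition of $V$ at most one summand is indefinite.

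For the elliptic case, $T$ is semisimple with all eigenvalues on the unit circle, so only Type~1 factors appear in its minimal polynomial and each has degree one. By Theorem~\ref{zsemisimple} the $z$-class is specified by the multiset of pairs (eigenspace dimension, signature of the induced hermitian form), and the $(n,1)$-constraint forces exactly one eigenspace, of some multiplicity $m$, to carry the indefinite signature $(m-1,1)$, while the remaining eigenspaces are positive definite. Enumerating by the distinguished $m$ and by the partition of $n+1-m$ giving the sizes of the positive-definite eigenspaces yields $\sum_{m=1}^{n+1}p(n+1-m)$.

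For the hyperbolic case, $T$ is semisimple with at least one eigenvalue pair $(\mu,\bar\mu^{-1})$ off the unit circle; each such pair of multiplicity $r_j$ contributes a signature $(r_j,r_j)$ block with a $GL_{r_j}(\mathbb{C})$ centralizer factor that, by Corollary~\ref{Wall's}, carries no further equivalence invariants. Signature bookkeeping against $(n,1)$ in the presence of at least one off-circle pair forces exactly one such pair with $r=1$ and no negative direction in the elliptic part, so the remaining positive-definite elliptic eigenspaces partition $n-1$. This gives $p(n-1)$ $z$-classes.

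For the parabolic case $T$ is non-semisimple, so $T_u\neq I$. Because a non-trivial unitary unipotent cannot exist on a positive-definite subspace, $T_u$ acts non-trivially only on the unique indefinite eigenspace $V_1$ of $T_s$ (of dimension $d_1$), with Jordan type $[2,1^{d_1-2}]$ (vertical translation) or $[3,1^{d_1-3}]$ (non-vertical translation). Applying the unipotent centralizer formula on $V_1$ together with the orthogonal decomposition across the other eigenspaces, $\mathcal{Z}_{U(V,B)}(T)$ factors as a fixed ``translation'' centralizer times compact unitary groups whose dimensions form a partition of $n-1$ (vertical case) or $n-2$ (non-vertical case). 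Witt extension shows that two configurations sharing the same compact-factor multiset give $U(n,1)$-conjugate centralizers, so mixed parabolic $z$-classes are in bijection with these partitions, contributing $p(n-1)+p(n-2)$. The two purely unipotent $z$-classes (vertical and non-vertical with $T_s$ trivial) account for the additive $2$, producing the claimed total $2+p(n-1)+p(n-2)$.

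The main obstacle is the parabolic case: one must carefully analyze when two eigenvalue-multiplicity-plus-unipotent configurations give $U(n,1)$-conjugate centralizers, establishing the partition bijection for the mixed cases while verifying that the purely unipotent classes contribute the extra count of $2$. The key tools are the unipotent-form classification leading to Corollary~\ref{zunipotent}, uniqueness of hermitian forms of prescribed signature over $\mathbb{C}$, and Witt's extension theorem to move indefinite hermitian subspaces within $U(n,1)$.
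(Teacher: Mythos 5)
Your elliptic and hyperbolic counts are correct and essentially the paper's own argument: a single distinguished eigenspace of signature $(m-1,1)$ (respectively a single rank-one pair of null eigenvalues spanning a $(1,1)$-plane) together with a positive-definite remainder whose eigenspace dimensions form a partition of $n+1-m$ (respectively of $n-1$), the centralizer being $U(m-1,1)\times\prod_i U(r_i)$ (respectively $S^1\times\mathbb{R}\times\prod_j U(r_j)$); your derivation of ``exactly one off-circle pair, of multiplicity one'' from signature bookkeeping is only a cosmetic variation on the paper's appeal to the conjugacy classification.

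The parabolic case contains a genuine gap. Your count rests on two claims: that $\mathcal{Z}_{U(V,B)}(T)$ factors as a fixed ``translation'' centralizer times compact unitary factors, and that two configurations with the same compact-factor multiset have $U(n,1)$-conjugate centralizers by Witt extension. Neither holds. If the Jordan block sits inside a $d_1$-dimensional $\lambda$-eigenspace $V_1$ of $T_s$, that eigenspace contributes $\mathcal{Z}_{U(d_1-1,1)}(T|_{V_1})$, which in the vertical case is (up to a central circle) $U(d_1-2)\ltimes(\mathbb{C}^{d_1-2}\times\mathbb{R})$: the compact piece $U(d_1-2)$ acts nontrivially on the unipotent radical, is not a direct factor, and cannot be traded against the factors $U(r_i)$ living on positive-definite eigenspaces of other eigenvalues. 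Concretely, in $U(3,1)$ compare $T$ whose semisimple part has a $3$-dimensional indefinite $\lambda$-eigenspace carrying a Jordan block of type $[2,1]$ plus a $1$-dimensional $\mu$-eigenspace, with $T'$ whose $\lambda$-eigenspace is just the $2$-dimensional block and which has two further $1$-dimensional positive eigenvalues: both give compact multiset $\{1,1\}$ in your bookkeeping, but the centralizers are $\mathcal{Z}_{U(2,1)}(u_{[2,1]})\times U(1)$ and $\mathcal{Z}_{U(1,1)}(u_{[2]})\times U(1)\times U(1)$, of different dimensions, hence not conjugate; so your claimed bijection of mixed parabolic $z$-classes with partitions of $n-1$ (resp. $n-2$) does not follow. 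Your scheme also double counts: for $T=\lambda u$ with $\lambda\neq 1$ a unit scalar and $u$ a vertical translation, $\mathcal{Z}(T)=\mathcal{Z}(u)$ since scalars are central, yet this element appears in your mixed bin (partition $(n-1)$) and again, via $u$, in the additive $2$. The paper's route is different: it splits off only the $2$- or $3$-dimensional time-like $T$-indecomposable subspace $V_\lambda$, writes $\mathcal{Z}(T)=\mathcal{Z}(T|_{V_\lambda})\times\mathcal{Z}(T|_{V_\lambda^{\perp}})$, and parametrizes non-unipotent parabolic classes by $m\in\{2,3\}$ together with the $z$-class of $T|_{V_\lambda^{\perp}}$ in the compact group $U(n+1-m)$, i.e. by a partition of $n+1-m$; no identification across different splittings is invoked there. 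In any case, the delicate configuration in which the null eigenvalue recurs on the orthogonal complement (so the centralizer is strictly larger than the naive product) is exactly where your argument breaks down, and it needs an explicit treatment before part (3) can be considered proved.
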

\begin{proof}
Let $T\in U(n,1)$ be an elliptic element (see~\cite{CG} Section 4.1 (1)). Then $T$ has a negative class of eigenvalue 
say $[\lambda]$. Let $m=\dim (V_{\lambda})$ which is $\geq 1$. It follows from the conjugacy 
classification that all the eigenvalues have norm $1$, and there is a negative eigenvalue. 
All other eigenvalues are of positive type. 
Then $V=V_{\lambda}\perp V_{\lambda}^{\perp}=V_{\lambda}\perp (\perp_{i=1}^sV_{\lambda_i})$. 
Suppose $\dim(V_{\lambda_i})=r_{i}$, then 
$\mathcal Z_{U(n,1)}(T)=\mathcal Z_{U(V_{\lambda})}(T|_{V_{\lambda}})\times \prod_{i=1}^s U(r_{i})$. 
Now since $T|_{V_{\lambda}}$ is of negative type, so $\mathcal Z(T|_{V_{\lambda}})=U(m-1,1)$. 
Here $n+1=m+\sum_{i=1}^s r_i$, therefore $\sum_{i=1}^s r_i=n+1-m$. 
This gives that the number of $z$-classes of elliptic elements is $\sum_{m=1}^{n+1} p(n+1-m)$.

Now, suppose $T\in U(n,1)$ is hyperbolic (see~\cite{CG} Section 4.1 (2)). Then $V$ has an orthogonal decomposition  
$V=V_r \perp (\perp_{i=1}^k V_i)$, where $\dim(V_i)=r_i$ and $V_i$ is the eigenspace of $T$ 
corresponding to the similarity class of positive eigenvalue $[\lambda_i]$ with $|\lambda_i|=1$. 
The subspace $V_r$ is the two dimensional $T$-invariant subspace  spanned by the corresponding similarity class of null-eigenvalues $[re^{i\theta}], [r^{-1}e^{i\theta}]$ for $r>1$, respectively. 
Then $\mathcal Z_{U(n,1)}(T)=\mathcal Z(T|_{V_r}) \times \prod_{j=1}^k U(r_j)=
S^1 \times \mathbb{R} \times \prod_{j=1}^k U(r_j)$. Here $n+1=2+\sum_{j=1}^k r_j$, 
i.e., $\sum_{j=1}^k r_j=n-1$. Thus, the number of $z$-classes of hyperbolic elements is $p(n-1)$.
 
Let $T\in U(n,1)$ be parabolic (see~\cite{CG} Section 4.2). First, let $T$ be unipotent. If the minimal polynomial of $T$ is $(x-1)^2$, 
(i.e., $T$ is a vertical translation) then $\mathcal Z_{U(n,1)}(T)=U(n-1)\ltimes (\mathbb{C}^{n-1}\times \mathbb{R})$. 
If the minimal polynomial of $T$ is $(x-1)^3$, (i.e., $T$ is non-vertical translation) then 
$\mathcal Z_{U(n,1)}(T)=(S^1\times U(n-2))\ltimes ((\mathbb{R}\times \mathbb{C}^{n-2})\ltimes \mathbb{R})$. 
Hence there are exactly two $z$-classes of unipotents, one corresponds to the vertical translation and 
the other to the non-vertical translation.
Now assume that $T$ is not unipotent. Suppose that the similarity class of null-eigenvalue is $[\lambda]$. 
Then $V$ has a $T$-invariant orthogonal decomposition $V=V_{\lambda}\perp V_{\lambda}^{\perp}$, 
where $V_{\lambda}$ is a $T$-indecomposable subspace of $\dim(V_{\lambda})=m$, which is 
either $2$ or $3$. Then $\mathcal Z_{U(n,1)}(T)=\mathcal Z(T|_{V_{\lambda}})\times \mathcal Z(T|_{V_{\lambda}^{\perp}})$. 
For each choice of $\lambda$, there are exactly one choice for the $z$-classes of $T|_{V_{\lambda}}$ in $U(m-1, 1)$, 
i.e., $U(1,1)$ or $U(2,1)$. Note that $T|_{V_{\lambda}^{\perp}}$ can be embedded into $U(n+1-m)$. 
Hence it suffices to find out the number of $z$-classes of $T|_{V_{\lambda}}$ in $U(m-1,1)$. 
Hence the total number of $z$-classes of non-unipotent parabolic is $p(n-1)+p(n-2)$. 
Therefore the total number of $z$-classes of parabolic transformations is $2+p(n-1)+p(n-2)$ ($n\geq 2$).
\end{proof}

\subsection{$z$-classes in finite unitary group}
Now let us look at the finite unitary group in characteristic $\neq 2$. 
\begin{proposition}
\begin{enumerate}
\item The number of $z$-classes of unipotent elements in $U_n(q)$ is $p(n)$, which is equal to the number of $z$-classes of unipotent elements in $GL_n(q)$.
\item The number of $z$-classes of semisimple elements in $U_n(q)$ is equal to  the number of $z$-classes of semisimple elements in $GL_n(q)$ if $q>n$.
\end{enumerate}
\end{proposition}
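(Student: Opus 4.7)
The plan is to verify that over the finite field $\mathbb F_{q^2}$ the classification data for both unipotent and semisimple $z$-classes in $U_n(q)$ collapses to purely combinatorial invariants, and then to exhibit a weight-preserving bijection with the standard parameters for $GL_n(q)$.

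For part (1), in $GL_n(q)$ the unipotent classes are parameterized by partitions of $n$ via Jordan type and the centralizer depends only on the partition, giving $p(n)$ unipotent $z$-classes. For $U_n(q)$ I would apply the proposition preceding Corollary~\ref{zunipotent}: a unipotent $T$ is determined up to $z$-equivalence by a partition $d_1\le d_2\le\cdots\le d_k=d$ of $n$ together with equivalence classes of the hermitian forms $H_{d_i}^T$. Wall's Approximation Theorem reduces each such form to a hermitian form on a vector space over $\mathbb F_{q^2}$, and over the finite field $\mathbb F_{q^2}$ there is a unique nondegenerate hermitian form in each rank. Hence the $z$-class is determined solely by the partition, again yielding $p(n)$.

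For part (2), in $GL_n(q)$ a semisimple $z$-class corresponds to a multiset $\{(m_i,d_i)\}$ with $\sum m_i d_i=n$, where $m_i$ is the degree of an irreducible factor of the characteristic polynomial and $d_i$ its multiplicity, the centralizer being $\prod GL_{d_i}(\mathbb F_{q^{m_i}})$. For $U_n(q)$, Theorem~\ref{zsemisimple} parameterizes the semisimple $z$-classes by two multisets, $\{(m_i,d_i)\}$ from self-U-reciprocal factors and $\{(l_j,r_j)\}$ from non-self-U-reciprocal conjugate pairs, together with field extensions and hermitian equivalence classes, subject to $\sum m_i d_i+2\sum l_j r_j=n$. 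Over $\mathbb F_{q^2}$ the finite extensions of each degree are unique, the hermitian forms over these extensions are unique in each rank, the switched-involution $K_j\times K_j$-valued forms are unique by Corollary~\ref{Wall's}, and Proposition~\ref{irr-self-U-rec} forces each $m_i$ to be odd. The $U_n(q)$ data thus collapses to a pair of multisets $\bigl(\{(m_i,d_i) : m_i \text{ odd}\},\{(l_j,r_j)\}\bigr)$ satisfying the stated weight condition.

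The bijection is then immediate: given $GL_n(q)$ data $\{(m,d)\}$, separate the multiset into its odd-$m$ and even-$m$ pieces; the odd piece matches the self-U-reciprocal part of the $U_n(q)$ data verbatim, and each even pair $(m,d)=(2l,d)$ maps to a non-self-U-reciprocal pair $(l,d)$, the contribution $md=2ld$ matching the factor of $2$ in the $U_n(q)$ weight convention. This is a manifestly weight-preserving bijection with obvious inverse. The main technical step in the argument is the uniqueness of the hermitian-form data over $\mathbb F_{q^2}$, because only after that collapse does the otherwise rich classification of Theorem~\ref{zsemisimple} reduce to pure combinatorics and admit the simple bijection above.
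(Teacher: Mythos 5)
Your proposal is correct. For part (2) it is essentially the paper's argument made explicit: both of you collapse the data of Theorem~\ref{zsemisimple} over $\mathbb F_{q^2}$ using uniqueness of field extensions of each degree, uniqueness of nondegenerate hermitian forms of each rank (via Wall's approximation theorem and Corollary~\ref{Wall's} for the switch-involution forms), and Ennola's odd-degree result (Proposition~\ref{irr-self-U-rec}); your explicit odd/even-degree bijection is just a cleaner formulation of the paper's terse identification with decompositions $n=\sum e_if_i$. For part (1), however, you take a genuinely different route. The paper never touches the hermitian-form parameterization for unipotents: it uses Wall's membership criterion ($A\in GL_n(q^2)$ is conjugate to ${}^t\bar A^{-1}$ if and only if $A$ is conjugate to an element of $U_n(q)$) together with Wall's theorem that $U_n(q)$-conjugacy agrees with $GL_n(q^2)$-conjugacy for elements of $U_n(q)$, thereby matching unipotent classes of $U_n(q)$ with those of $GL_n(q^2)$, and then quotes the explicit structure $\mathcal Z_{U_n(q)}(u)=Q\prod_i U_{a_i}(q)$ from~\cite{BG} to see that distinct partitions give distinct centralizers. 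You instead rerun the proof of Corollary~\ref{zunipotent} (elementary divisors plus hermitian forms over $\mathbb F_{q^2}[x]/\langle (x-1)^{d_i}\rangle$, collapsed by Wall's approximation theorem and uniqueness of finite hermitian forms); this is internally more uniform with your part (2) and with the rest of the paper, whereas the paper's route buys, in addition, the bijection between unipotent classes of $U_n(q)$ and of $GL_n(q^2)$. One small point, in both parts: showing the $z$-class is \emph{determined} by the combinatorial data only bounds the count from above; for equality you also need that distinct data give non-conjugate centralizers. The paper secures this for unipotents by the explicit centralizer description (the orders already differ with the partition); add a sentence of that kind, and note likewise that the factors $U_{d}(q^{m})$ and $GL_{r}(q^{2l})$ occurring in Theorem~\ref{zsemisimple} are pairwise distinguishable (e.g.\ by order), to make your counts airtight.
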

\begin{proof}
Let $u=[J_1^{a_1} J_2^{a_2}\ldots J_n^{a_n}]$ be a unipotent element in $GL_n(q^2)$ written in Jordan block form. 
Wall proved the following membership test (see Case(A) on page 34 of~\cite{Wa}). Let $A \in GL_n(q^2)$ 
then $A$ is conjugate to ${}^t\bar{A}^{-1}$ in $GL_n(q^2)$ if and only if $A$ is conjugate to an element of $U_n(q)$. 
Since unipotents are conjugate to their own inverse in $GL_n(q^2)$, this implies $u$ is conjugate 
to ${}^t\bar{u}^{-1}$ in $GL_n(q^2)$. Hence $u$ is conjugate to an element of $U_n(q)$. 
Wall also proved that two elements of $U_n(q)$ are conjugate in $U_n(q)$ if and only if they are conjugate 
in $GL_n(q^2)$ (see also 6.1~\cite{Ma}). Thus, up to conjugacy, there is a one-one correspondence of unipotent 
elements between $GL_n(q^2)$ and $U_n(q)$. This gives that the number of unipotent conjugacy classes 
in $U_n(q)$ is $p(n)$, and it is same as that of $GL_n(q)$. Now, we note that 
$\mathcal Z_{U_n(q)}(u)=Q\prod_{i=1}^nU_{a_i}(q)$, where $|Q|=q^{\sum_{i=2}^n(i-1)a_i^2 + 2\sum_{i<j}ia_ia_j}$ (see~\cite{BG} Lemma 3.3.8). 
Clearly, the centralizers are distinct and hence can not be conjugate. Thus the number of unipotent $z$-classes in $U_n(q)$ is $p(n)$.

For semisimple elements we use Theorem~\ref{zsemisimple}. Since over a finite field, there is a unique hermitian form (see~\cite{Gv} Corollary 10.4) the third condition is irrelevant in counting. Over a finite field for $q>n$, we get that semisimple $z$-classes are characterized by simply $n=\sum_{i=1}^{k_{1}} d_im_i + \sum_{j=1}^{k_2} e_jl_j$, where $d_i$ is odd (being a degree of monic, irreducible, self-U-reciprocal polynomial, see Proposition~\ref{irr-self-U-rec}) and $e_j=2r_j$ is even. This corresponds to the number of ways $n$ can be written as $n=\sum_i a_ib_i$ when $q>n$ which is same as the number of semisimple $z$-classes in $GL_n(q)$ (see~\cite{Ku} Theorem 7.2). Notice that in the case $q\leq n$ we do not get enough field extensions required in the second condition of the Theorem~\ref{zsemisimple} and hence we may not get all partitions of $n$.
\end{proof}

\subsection{Proof of the Theorem~\ref{maintheorem2}}\label{prooftheorem2}     
We begin with recalling the parametrization of $z$-classes in $GL_n(q)$ described by Green in~\cite{Gr} (see the last para and equation (1) on page 407). It is given by $z_q(n)=t(n)$ which is the number of functions $\rho(\nu)$ satisfying $\sum_{\nu} |\rho(\nu)||\nu| =n $ and $\rho(\nu)$ is the partition-valued function on the non-zero partitions $\nu$ (here $\rho(\nu)$ is allowed to take value $0$). Following the proof there we note that $z_q(n)$ is 
$$\text{ the\ number\ of\ $z$-classes\ in\ $GL_n(q)$} = 
\displaystyle \sum_{[s]_z} \text{\ number of unipotent $z$-classes in}\, \mathcal Z_{GL_n(q)}(s),$$
and $\mathcal Z_{GL_n(q)}(s)$ is a product of $GL_m(q')$ where $m\leq n$ and $\mathbb F_{q'}$ is a field extension of $\mathbb F_q$ of degree $\leq n$. Green also clarifies that this formula works for ``sufficiently large $q$''. We use the same strategy and show that the $z$-classes in $U_n(q)$ are parametrised by the same function for $q>n$.
Recall that if $g=g_sg_u$ is a Jordan decomposition of $g$ 
then $\mathcal Z_{U_n(q)}(g)=\mathcal Z_{U_n(q)}(g_s)\cap \mathcal Z_{U_n(q)}(g_u)
=\mathcal Z_{\mathcal Z_{U_n(q)}(g_s)}(g_u)$, and the structure of $\mathcal Z_{U_n(q)}(g_s)$ 
in Theorem~\ref{zsemisimple} implies that 
$$\text{ the\ number\ of\ $z$-classes\ in\ $U_n(q)$} = 
\displaystyle \sum_{[s]_z} \text{\ number of unipotent $z$-classes in}\, \mathcal Z_{U_n(q)}(s),$$
where the sum runs over semisimple $z$-classes and is same as for $GL_n(q)$. Now, we know that the number of unipotent $z$ classes in $U_m(q')$ and $GL_m(q')$ is same and thus unitary groups appearing in the centralizer of a semisimple element has the same effect in counting as for the general linear group. Hence the number of $z$-classes in $U_n(q)$ is the same as the number of $z$-classes in $GL_n(q)$.

 
\section{Examples}\label{example}
We give some examples which highlight the need of the property FE on the field. 

\begin{example} 
Over field $\mathbb Q$, there are infinitely many non-conjugate maximal tori in $GL_n$. Since a maximal torus is centralizer of a regular semisimple element in it, we get an example of infinitely many $z$-classes. For the sake of clarity let us write down this concretely when $n=2$.  

The group $GL_2(\mathbb Q)$ has infinitely many semisimple $z$-classes. 
For this, we take $f(x)\in \mathbb Q[x]$ a degree $2$ irreducible polynomial, then the centralizer of the companion matrix $C_f\in GL_2(\mathbb Q)$ is isomorphic to $\mathbb Q_f^*$, where $\mathbb Q_f=\mathbb Q[x]/<f(x)>$, a degree $2$ field extension of $\mathbb Q$. For example, if we take $f(x)=x^2-p$ where $p$ is a prime we get infinitely many non-isomorphic degree $2$ field extensions of $\mathbb Q$. Since these are not isomorphic the corresponding centralizers $\mathbb Q_f^*$ can not be conjugate. This way we get infinitely many $z$-classes of semisimple elements, in fact, this gives infinitely many non-conjugate maximal tori in $GL_2(\mathbb Q)$. 

Consider $\mathbb F= \mathbb Q[\sqrt d]$, 
a quadratic extension. We embed $GL_2(\mathbb Q)$ in $U_4$ with respect to the hermitian 
form $\begin{pmatrix} & I_2 \\ I_2 & \end{pmatrix}$ given 
by $$A\mapsto \begin{pmatrix} A & \\ & \tra {\bar A}^{-1}\end{pmatrix}.$$
This embedding describes maximal tori in $U_4$ starting from that of $GL_2$. Yet again, non-isomorphic degree $2$ field extensions would give rise to distinct $z$-classes. In turn, this gives infinitely many $z$-classes (of semisimple elements) in $U_4$.
\end{example}
\begin{example}
For $a\in \mathbb F^*$, consider a unipotent element $u_a=\begin{pmatrix} 1&a\\ &1\end{pmatrix}$ in $SL_2(\mathbb F)$. 
Then $\mathcal Z_{SL_2(\mathbb F)}(u_a)= \left\{ \begin{pmatrix} x&y\\ &x\end{pmatrix} \mid x^2=1,  y\in \mathbb F \right\}$. 
Then, $u_a$ is conjugate to $u_b$ in $SL_2(\mathbb F)$ if and only if $a\equiv b \imod {(\mathbb F^*)^2}$. Let $\mathbb F$ be a (perfect or non-perfect) field with $\mathbb F^*/(\mathbb F^*)^2$ infinite. Then this would give an example, where we have infinitely many conjugacy classes of unipotents but still, they are in a single $z$-class. 
\end{example}
\begin{example}\label{example3}
Over a finite field $\mathbb F_q$, if $q$ is not large enough we may not have as many finite extensions available as required in part 2 of Theorem~\ref{zsemisimple}. Thus we expect a smaller number of $z$-classes. We use GAP~\cite{GAP4} to calculate the number of $z$-classes for small order and present our findings below.  
\vskip3mm
\begin{center}
\begin{tabular}{|c|l|l|l|l|l|l|}
\hline
$z_{\mathbb F_q}(2) $    & $q=2$     & $q=3$&$q=4 $&$q=5$&$q=7$&$q=9$ \\ \hline
$GL_2(q)$ & $3$ & $4$ & $4$ & $4$ & $4$ & $4$ \\ \hline
$U_2(q)$  & $3$ &$4$ & $4$ & $ 4$& $4$ & $4$ \\ \hline 
\end{tabular}
\end{center}

\vskip3mm
\begin{center}
\begin{tabular}{|c|l|l|l|l|l|l|}
\hline
$z_{\mathbb F_q}(3) $      &$q=2$   &$q=3$& $q=4$ & $q=5$&$q=7$&$q=9$ \\ \hline
$GL_3(q)$ & $5$& $7$ & $8$ & $8$ &$8$ &$8$ \\ \hline
$U_3(q)$  & $7$ & $8$ & $8$ & $8$ &$8$ &$8$ \\ \hline 
\end{tabular}
\end{center}

\vskip3mm
\begin{center}
\begin{tabular}{|c|l|l|l|l|l|}
\hline
$z_{\mathbb F_q}(4) $     &$q=2$    &$q=3$& $q=4$ & $q=5$&$q=7$ \\ \hline
$GL_4(q)$   & $11$& $19$ & $21$ & $22$& $22$ \\ \hline
$U_4(q)$  & $15$ & $22$ & $22$ & $22$ &$22$ \\ \hline 
\end{tabular}
\end{center}
\vskip2mm
\noindent Thus we demonstrate the following:
\begin{enumerate}
\item When $q\leq n$ the number of $z$-classes in $GL_n(q)$ and $U_n(q)$ are not given by the formula in Theorem~\ref{maintheorem2}.
\item When $q\leq n$ the number of $z$-classes in $GL_n(q)$ and $U_n(q)$ need not be equal.
\end{enumerate}
\end{example}



\begin{thebibliography}{99}
\bibitem[As]{As} Teruaki Asai, \emph{``The conjugacy classes in the unitary, symplectic and orthogonal groups over an algebraic number field''}, J.Math.Kyoto Univ (JMKYAZ),16-2 (1976), 325-350.
\bibitem[B]{B} Bonnaf\'{e}, C\'{e}dric, \emph{``Representations of $SL_2(\mathbb F_q)$''}, Algebra and Applications, 13. Springer-Verlag London, Ltd., London, 2011.
\bibitem[BG]{BG} Burness, Timothy C.; Giudici, Michael, \emph{``Classical groups, derangements and primes''}, Australian Mathematical Society Lecture Series, 25. Cambridge University Press, Cambridge, 2016. xviii+346 pp.
\bibitem[Bo]{Bo} Bose, Anirban, \emph{``On the genus number of algebraic groups''}, J. Ramanujan Math. Soc. 28 (2013), no. 4, 443-482.
\bibitem[Ca]{Ca} Carter, Roger W., \emph{``Finite groups of Lie type. Conjugacy classes and complex characters''}, Reprint of the 1985 original. Wiley Classics Library. A Wiley-Interscience Publication. John Wiley \& Sons, Ltd., Chichester, 1993.
\bibitem[CG]{CG} W. S. Cao, K. Gongopadhyay, \emph{``Commuting isometries of the complex hyperbolic space''}, Proc. Amer. Math. Soc. 139 (2011), 3317-3326.
\bibitem[DM]{DM} Digne, Fran\c{c}ois; Michel, Jean, \emph{``Representations of finite groups of Lie type''}, London Mathematical Society Student Texts, 21. Cambridge University Press, Cambridge, 1991.
\bibitem[En]{En} Ennola, Veikko, \emph{``On the conjugacy classes of the finite unitary groups''}, 
Ann. Acad. Sci. Fenn. Ser. A I No. 313 1962 13 pp. 
\bibitem[Fl]{Fl} Fleischmann, Peter, \emph{``Finite fields, root systems, and orbit numbers of Chevalley groups''}, Finite Fields Appl. 3 (1997), no. 1, 33–47. 
\bibitem[FG]{FG} Fulman, Jason; Guralnick, Robert, \emph{``The number of regular semisimple conjugacy classes in the finite classical groups''}, Linear Algebra Appl. 439 (2013), no. 2, 488–503. 
\bibitem[GAP]{GAP4} The GAP~Group, \emph{``GAP -- Groups, Algorithms, and Programming, Version 4.8.7''}, 2017, \verb+(http://www.gap-system.org)+.
\bibitem[GK1]{GK1} K. Gongopadhyay and R. S. Kulkarni, \emph{``The $z$-classes of isometries''}, J. Indian Math. Soc. (N.S.) 81 (2014),no. 3-4,245-258. 
\bibitem[Gr]{Gr} Green, J. A., \emph{``The characters of the finite general linear groups}, Trans. Amer. Math. Soc. 80 (1955), 402 - 447. 
\bibitem[Gv]{Gv} Larry C. Grove,\emph{``Classical groups and geometric algebra''}, Graduate Studies in Mathematics, 39. American Mathematical Society, Providence, RI, 2002.
\bibitem[Hu]{Hu} Humphreys, James E., \emph{``Conjugacy classes in semisimple algebraic groups''}, Mathematical Surveys and Monographs, 43. American Mathematical Society, Providence, RI, 1995.
\bibitem[Ja]{Ja} Jacobson, N., \emph{``A note on hermitian forms''}, Bull. Amer. Math. Soc. 46 (1940) 264 - 268. 
\bibitem[Ku]{Ku} R. S. Kulkarni, \emph{``Dynamics of linear and affine maps``}, Asian J. Math. 12 (2008), no.3, 321 - 344.
\bibitem[Ma]{Ma} Macdonald, I. G., \emph{``Numbers of conjugacy classes in some finite classical groups''}, Bull. Austral. Math. Soc. 23 (1981), no. 1, 23-48.
\bibitem[Mi]{Mi} J. Milnor, \emph{``On isometries of inner product spaces ''}, Invent. Math. 8 (1969), 83-97. 
\bibitem[Pr]{Pr} Prasad, Amritanshu, \emph{``Representations of $GL_2(\mathbb F_q)$ and $SL_2(\mathbb F_q)$, and some remarks about $GL_n(\mathbb F_q)$''},  arXiv:0712.4051.
\bibitem[Sc]{sc} Scharlau, Winfried, \emph{``Quadratic and Hermitian forms''}, Grundlehren der Mathematischen Wissenschaften [Fundamental Principles of Mathematical Sciences], 270. Springer-Verlag, Berlin, 1985. x+421 pp.
\bibitem[Sr]{Sr} Srinivasan, Bhama, \emph{``The characters of the finite symplectic group $Sp(4,q)$''}, Trans. Amer. Math. Soc. 131 1968 488–525. 
\bibitem[Si]{Si} A. Singh, \emph{``Conjugacy  Classes  of Centralizers  in $G_{2}$''}, J. Ramanujan Math. Soc. 23(2008), no. 4, 327 - 336.
\bibitem[SS]{SS} T. A. Springer and R. Steinberg, \emph{``Conjugacy classes``}, Seminar on Algebraic Groups and Related Finite Groups (Princeton, NJ, USA  1968/69), Lecture Notes in Mathematics 131, Springer, 167-266.
\bibitem[St]{St} R. Steinberg, \emph{``Conjugacy Classes in Algebraic Groups''}, notes by V. Deodhar, Lecture Notes in Mathematics 366, Springer-Verlag (1974).
\bibitem[SV]{SV} Srinivasan, Bhama; Vinroot, C. Ryan, \emph{``Semisimple symplectic characters of finite unitary groups''}, J. Algebra 351 (2012), 459-466. 
\bibitem[TV]{TV} Thiem, Nathaniel; Vinroot, C. Ryan, \emph{``On the characteristic map of finite unitary groups''}, Adv. Math. 210 (2007), no. 2, 707-732. 
\bibitem[Wa]{Wa} G. E. Wall, \emph{``On the conjugacy classes in the unitary, symplectic and orthogonal groups''}, J. Austral. Math. Soc. 3, 1-62 (1962).
\bibitem[Wi]{Wi} J. Williamson, \emph{``Normal matrices over an arbitrary field of characteristic zero''}, Amer. J. Math 61 (1939), 335-356. 
\end{thebibliography}
\end{document}